\begin{document}

\baselineskip 16.6pt
\hfuzz=6pt

\widowpenalty=10000

\newtheorem{cl}{Claim}
\newtheorem{theorem}{Theorem}[section]
\newtheorem{proposition}[theorem]{Proposition}
\newtheorem{coro}[theorem]{Corollary}
\newtheorem{lemma}[theorem]{Lemma}
\newtheorem{definition}[theorem]{Definition}
\newtheorem{assum}{Assumption}[section]
\newtheorem{example}[theorem]{Example}
\newtheorem{remark}[theorem]{Remark}
\renewcommand{\theequation}
{\thesection.\arabic{equation}}

\def\SL{\sqrt H}

\newcommand{\cent}{\operatorname{cent}}

\newcommand{\wid}{\operatorname{width}}
\newcommand{\heit}{\operatorname{height}}
\newcommand{\intav}{-\!\!\!\!\!\!\int}

\newcommand{\cdim}{n}

\newcommand{\set}[1]{\mathfrak{#1}}
\newcommand{\vrect}{\set{V}}
\newcommand{\Stack}{\set{S}}
\newcommand{\tile}{\set{T}}

\newcommand{\mar}[1]{{\marginpar{\sffamily{\scriptsize
        #1}}}}

\newcommand{\as}[1]{{\mar{AS:#1}}}
\newcommand\C{\mathbb{C}}
\newcommand\Z{\mathbb{Z}}
\newcommand\R{\mathbb{R}}
\newcommand\RR{\mathbb{R}}
\newcommand\CC{\mathbb{C}}
\newcommand\NN{\mathbb{N}}
\newcommand\ZZ{\mathbb{Z}}
\newcommand\HH{\mathbb{H}}
\def\RN {\mathbb{R}^n}
\def\ls{\lesssim}
\renewcommand\Re{\operatorname{Re}}
\renewcommand\Im{\operatorname{Im}}

\newcommand{\mc}{\mathcal}
\newcommand\D{\mathcal{D}}
\def\hs{\hspace{0.33cm}}
\newcommand{\la}{\alpha}
\def \l {\alpha}
\newcommand{\eps}{\tau}
\newcommand{\pl}{\partial}
\newcommand{\supp}{{\rm supp}{\hspace{.05cm}}}
\newcommand{\x}{\times}
\newcommand{\lag}{\langle}
\newcommand{\rag}{\rangle}

\newcommand{\lset}{\left\lbrace}
\newcommand{\rset}{\right\rbrace}

\newcommand\wrt{\,{\rm d}}

\title[]{Commutators of maximal operators on weighted Morrey spaces}

\author{Manasa N. Vempati}
\address{}
\email{}

  \date{}

\begin{abstract}
In this article we obtain the characterization for the commutators of maximal functions on the weighted Morrey spaces in the setting of spaces of homogeneous type. More precisely, we characterize BMO spaces using the commutators of Hardy-Littlewood maximal function, sharp and fractional maximal functions.


\end{abstract}

\subjclass[2010]{42B25, 43A85}
\keywords{Commutator, maximal function, space of homogeneous type, BMO space, weight, Morrey spaces}

\maketitle

\section{\bf Introduction and Statement of Main Results}
\setcounter{equation}{0}

The study of maximal operator and their commutators, including their boundedness characterizations on certain function spaces, is a central topic in harmonic analysis and non-linear PDE's. In 2000, Bastero, Milman and Ruiz \cite{jmf} studied the necessary and sufficient condition for the boundedness of commutator of the Hardy–Littlewood maximal function and the commutator of the sharp maximal function on $L^p(\mathbb{R}^n)$. Later,  Zhang and Wu \cite{zlw}, obtained a similar result for the commutator of the fractional maximal function. Since then, the boundedness of these classical operators and their commutators have been extensively studied on various function spaces in the Euclidean setting, such as the weighted $L^p$ spaces (e.g. \cite{dca, dsa}) and Morrey spaces (e.g. \cite{a, ss, bg1, bg2, gcm}).

Motivated by these results, in this article we provide the boundedness characterizations for maximal function commutators on the Morrey spaces (weighted) in the general setting of spaces of homogeneous type. In modern harmonic analysis, a central focus has been extending the real-variable theory beyond the Euclidean framework, where the underlying space is $\mathbb{R}^n$, equipped with the Euclidean metric and Lebesgue measure, to more general settings such as the spaces of homoegenous type. These spaces have been extensive studied; see, for e.g., \cite{clw, nvm2, rg1, rg2, vnm1, djs, dh, cmlv, cfiv}.

 Consider $(X,d,\mu)$, a space of homogeneous type and let $p\in (1,\infty)$ and $\kappa\in (0,1)$. The weighted Morrey space $L_{\omega, \nu}^{p,\kappa}(X)$ corresponding to two weights $\omega$ and $\nu$ on $X$, in the sense of Komori and Shirai \cite{ks} is defined as

\vspace{-0.1 cm}

\begin{equation*}
   L_{\omega,\nu}^{p,\kappa}(X) := \{f\in L_{loc}^{p}(X,\nu):\|f\|_{L_{\omega,\nu}^{p,\kappa}(X)} <\infty\},
\end{equation*}
where
\begin{equation*}
    \|f\|_{L_{\omega,\nu}^{p,\kappa}(X)} := \sup_{B}\left\{\frac{1}{[\nu(B)]^\kappa}\int_{B}|f(h)|^p \omega(h)d\mu(h)\right\}^\frac{1}{p}.
\end{equation*}
\vspace{0.1 cm}

and the supremum is taken over all balls $B$ in $X$. If $\omega = \nu,$ then we denote by $L_{\omega}^{p,\kappa}(X)$, the classical weighted Morrey space. Note that, when $\kappa=0,$ then $L_{\omega, \nu}^{p,\kappa}(X)= L^p(X,\omega),$ the weighted $L^p$ space on $X.$

Morrey spaces, alongside weighted $L^p(w)$ spaces, play a pivotal role in the study of partial differential equations. In particular, Morrey spaces have proven highly useful for studying the local behavior of solutions to elliptic partial differential equations and have been heavily researched (see, for example \cite{gvqx,jm, akl, rjem, ssht, vsg}). Since the weighted Morrey spaces naturally generalize weighted Lebesgue spaces, thus, along this literature, it is natural to study the boundedness of maximal operators commutators on these spaces.

\vspace{0.1 cm}
We now recall the definition of the the Hardy--Littlewood maximal function on the space of homogeneous type $(X,d,\mu)$. For a locally integrable function $f$ on $X$, the Hardy--Littlewood maximal function $ M_pf(x)$, $1\leq p<\infty$ is defined as
$$  M_pf(x):=\sup_{B \ni x} \left({\frac{1}{\mu(B)}}\int_B |f(y)|^p\,d\mu(y)\right)^{\frac{1}{p}}, $$
where the supremum is taken over all balls $B\subset X$. Observe that, for $p=1$, $M=M_1$ is the classical Hardy–Littlewood maximal operator. 

For a fixed ball $B_0 \in X$, we denote by $M_{p, B_0}f(x)$, the Hardy–Littlewood maximal function relative to $B_0$,

\vspace{-0.5 cm}

\begin{equation*}
 M_{p, B_0}f(x):=\sup_{x\in B \subset B_0} \left(\frac{1}{\mu(B)}\int_B |f(y)|^p\,d\mu(y)\right)^{\frac{1}{p}},
\end{equation*}

Now we recall that, the commutator of the Hardy–Littlewood maximal operator given $b\in L^1_{loc}(X)$, is defined for all $x\in X$ as
\vspace{-0.5 cm}

\begin{equation*}
    [M_p, b]f(x) :=  b(x)M_pf(x) - M_P(bf)(x).
\end{equation*}

Below, we state our result concerning the boundedness characterization for the commutator of the Hardy--Littlewood maximal functions.

\begin{theorem}\label{thm main1}
Let $\kappa\in(0,1), \omega\in A_p(X)$ for $1<p<\infty$ and $b$ be a real valued, locally integrable function in $(X,d,\mu).$ Then the following conditions are equivalent:

\begin{enumerate}
\item[(i)] If $b\in BMO(X)$ and $b^-\in L^\infty(X),$
\item[(ii)] The commutator $[M_p, b]$ is bounded on $L_{\omega}^{q,\kappa}(X)$, $p<  q < \infty$.
\item[(iii)] For some $q\in (1,\infty),$ there exists some constant $C$, such that
    \begin{equation}
    \sup_{B}  \frac{\|(b - M_{p,B}(b))\chi_B\|_{L_{\omega}^{q,\kappa}(X)}}{\|\chi_B\|_{L_{\omega}^{q,\kappa}(X)}}  < C.
    \end{equation}
\end{enumerate}
\end{theorem}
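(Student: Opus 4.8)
The plan is to prove the cyclic chain $(i)\Rightarrow(ii)\Rightarrow(iii)\Rightarrow(i)$. Since condition $(i)$ is independent of the exponent $q$, this automatically reconciles the ``for all $p<q<\infty$'' in $(ii)$ with the ``for some $q$'' in $(iii)$: once $(iii)$ for a single exponent yields $(i)$, the latter regenerates $(ii)$ across the whole range. For the implication $(i)\Rightarrow(ii)$ I would first extract a pointwise control of the commutator. Writing $b=|b|-2b^-$ and applying Minkowski's inequality inside each averaging ball gives the two-sided estimate $\big|M_p(bf)(x)-|b(x)|M_pf(x)\big|\le M_{p,b}f(x)$, where the maximal commutator is $M_{p,b}f(x):=\sup_{B\ni x}\big(\frac{1}{\mu(B)}\int_B|b(x)-b(y)|^p|f(y)|^p\,d\mu(y)\big)^{1/p}$. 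Combining this with the splitting of $b$ produces the key bound $|[M_p,b]f(x)|\le M_{p,b}f(x)+2\|b^-\|_{L^\infty}M_pf(x)$, in which the hypothesis $b^-\in L^\infty(X)$ is exactly what is needed to render the second term harmless.

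Consequently, the boundedness of $[M_p,b]$ on $L^{q,\kappa}_\omega(X)$ reduces to the boundedness on that space of the two sublinear operators $M_p$ and $M_{p,b}$. The first is available for $\omega\in A_p(X)$ in the relevant range $q>p$; the second is where the BMO hypothesis is consumed, via the John--Nirenberg inequality, which converts the oscillation $|b(x)-b(y)|$ into exponentially integrable factors that can be absorbed against the reverse-H\"older and $A_\infty$ structure of $\omega$ uniformly over the balls defining the Morrey norm. I expect this maximal-commutator estimate on the weighted Morrey space to be the principal obstacle, since it must reconcile the non-local oscillation of $b$ with the ball-by-ball supremum defining $\|\cdot\|_{L^{q,\kappa}_\omega}$.

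The implication $(ii)\Rightarrow(iii)$ is the testing step, and is comparatively soft. Fix a ball $B$; for a.e.\ $x\in B$ one has $M_p\chi_B(x)=1$, so $[M_p,b]\chi_B(x)=b(x)-M_p(b\chi_B)(x)$. Restricting the defining supremum to sub-balls of $B$ gives $M_p(b\chi_B)(x)\ge M_{p,B}(b)(x)$, while the Lebesgue differentiation theorem yields $M_{p,B}(b)(x)\ge|b(x)|\ge b(x)$. Hence on $B$ we obtain the pointwise domination $|(b-M_{p,B}(b))\chi_B(x)|=M_{p,B}(b)(x)-b(x)\le M_p(b\chi_B)(x)-b(x)=|[M_p,b]\chi_B(x)|$, and by monotonicity of the Morrey norm, $\|(b-M_{p,B}(b))\chi_B\|_{L^{q,\kappa}_\omega}\le\|[M_p,b]\chi_B\|_{L^{q,\kappa}_\omega}\le\|[M_p,b]\|_{\mathrm{op}}\,\|\chi_B\|_{L^{q,\kappa}_\omega}$, which is precisely $(iii)$.

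Finally, for $(iii)\Rightarrow(i)$ I would run the same pointwise facts in reverse to separate the two conclusions. From $M_{p,B}(b)\ge|b|$ a.e.\ on $B$ we get $M_{p,B}(b)-b\ge 2b^-$, so $(iii)$ forces $\|b^-\chi_B\|_{L^{q,\kappa}_\omega}\lesssim\|\chi_B\|_{L^{q,\kappa}_\omega}$ uniformly in $B$, whence $b^-\in L^\infty(X)$ by a Lebesgue-point argument. For the BMO bound, writing $b_B:=\frac{1}{\mu(B)}\int_B b\,d\mu$ and using $\int_B(b-b_B)\,d\mu=0$ gives $\frac{1}{\mu(B)}\int_B|b-b_B|\,d\mu=\frac{2}{\mu(B)}\int_B(b_B-b)^+\,d\mu$, and since $M_{p,B}(b)\ge|b_B|$ and $M_{p,B}(b)\ge b$ one has $(b_B-b)^+\le M_{p,B}(b)-b=|b-M_{p,B}(b)|$. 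Passing from the weighted Morrey quantity in $(iii)$ to the unweighted average through the $A_p$ (hence $A_q$) comparison between $\frac{1}{\mu(B)}\int_B|g|\,d\mu$ and $\|g\chi_B\|_{L^{q,\kappa}_\omega}/\|\chi_B\|_{L^{q,\kappa}_\omega}$ then delivers $b\in BMO(X)$, completing the cycle.
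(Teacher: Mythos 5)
Your implications (ii)$\Rightarrow$(iii) and (iii)$\Rightarrow$(i) are correct and essentially coincide with the paper's own argument: testing on $f=\chi_B$, using $M_p\chi_B=1$ a.e.\ on $B$, and then passing between the weighted Morrey quotient and unweighted averages via H\"older and the $A_q$ condition, with the pointwise inequalities $(b_B-b)^+\le M_{p,B}(b)-b$ and $2b^-\le M_{p,B}(b)-b$. (In (ii)$\Rightarrow$(iii) your use of the one-sided inequality $M_p(b\chi_B)\ge M_{p,B}(b)$ on $B$ is actually a bit more robust than the paper's claimed equality $M_p(b\chi_B)=M_{p,B}(b)$, and it suffices.) Both you and the paper implicitly need $q\ge p$ in (iii) so that $\omega\in A_p\subset A_q$; this shared caveat aside, those two legs of the cycle are fine.

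The genuine gap is in (i)$\Rightarrow$(ii). Your pointwise reduction $|[M_p,b]f|\le M_{p,b}f+2\|b^-\|_{L^\infty}M_pf$ (via $b=|b|-2b^-$ and the reverse triangle inequality in $L^p$ on each ball) is sound, and the term $M_pf$ is controlled by Lemma \ref{mqbound}. But the entire implication then rests on the boundedness of your maximal commutator $M_{p,b}$ on $L^{q,\kappa}_{\omega}(X)$ for $b\in BMO(X)$, and this you never prove: you explicitly defer it as ``the principal obstacle'' with only a heuristic appeal to John--Nirenberg and reverse H\"older. Note that the bound \eqref{maxcommu}, $C_b(f)\le C\|b\|_{BMO(X)}M^2(f)$, does not apply as stated, since your $M_{p,b}$ has $|b(x)-b(y)|^p$ inside the average rather than the first power; one would need a $p$-power John--Nirenberg estimate (e.g.\ $M_{p,b}f\lesssim\|b\|_{BMO(X)}M_r f$ for some $r\in(p,q)$, or a bound by iterated maximal functions of $|f|^p$) together with the Morrey bounds for the resulting operators, none of which is supplied. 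The paper bypasses this entirely by a different route: it quotes the weighted $L^q(X,\omega)$ bound for the commutator with a \emph{nonnegative} BMO symbol (Lemma \ref{bmax} from \cite{gvw}, extended to $M_p$ in Remark \ref{bmp}), transfers it to $L^{q,\kappa}_{\omega}(X)$ by a localization argument (Proposition \ref{prop1}, testing on $f=h\chi_B/\omega(B)^{\kappa/q}$), and then removes the sign restriction via $|[M_p,b]f-[M_p,|b|]f|\le 2b^-M_pf$. Until you either prove the Morrey-space bound for $M_{p,b}$ or switch to such a transference argument, your cycle is not closed.
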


For a locally integrable function  $b(x)$, we denote by $C_b$, the maximal commutator on $X$,
$$C_b(f)(x):=\sup_{B\ni x}{\frac{1}{\mu(B)}}\int_{B}|b(x)-b(y)| |f(y)|d\mu(y),$$
where the supremum is taken over all balls $B\subset X$. Note that, from \cite{mgkm}, we have the following upperbound for the maximal commutator $C_b$

\vspace{-0.5 cm}

\begin{equation}\label{maxcommu}
    C_b(f)(x) \leq C\|b\|_{BMO(X)}M^2(f)(x).
\end{equation}

As a consequence of the Theorem \ref{thm main1} and the equation \ref{maxcommu} we obtain the following boundedness characterization for the maximal commutator.

\begin{theorem}\label{thmax}
    Let $b$ be a locally integrable function on $(X,d,\mu)$, and $1<p<\infty$ and $\omega \in A_p(X)$. Then the maximal commutator $C_b$ is bounded on $L_{\omega}^{p,\kappa}(X)$ if and only if $b\in BMO(X).$
\end{theorem}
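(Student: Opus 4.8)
The plan is to treat the two implications separately, the key conceptual point being that the absolute value $|b(x)-b(y)|$ appearing in $C_b$ is symmetric in its two arguments, so none of the positive/negative-part obstructions that force the hypothesis $b^-\in L^\infty(X)$ in Theorem~\ref{thm main1} arise here. For the sufficiency direction, suppose $b\in BMO(X)$. By the pointwise bound \eqref{maxcommu} we have $C_b(f)(x)\le C\|b\|_{BMO(X)}M^2(f)(x)$, where $M^2=M\circ M$ is the iterated Hardy--Littlewood maximal operator. Since $\omega\in A_p(X)$, the operator $M$ is bounded on $L_\omega^{p,\kappa}(X)$ (the weighted Morrey analogue of Muckenhoupt's theorem in the homogeneous setting), hence so is its composition $M^2$; composing these estimates gives the boundedness of $C_b$ on $L_\omega^{p,\kappa}(X)$.

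For the necessity direction, assume $C_b$ is bounded on $L_\omega^{p,\kappa}(X)$ and fix an arbitrary ball $B$. Testing against $f=\chi_B$ and using Jensen's inequality, for $x\in B$ I obtain
\[
C_b(\chi_B)(x)\ge\frac{1}{\mu(B)}\int_B|b(x)-b(y)|\,d\mu(y)\ge\Big|\frac{1}{\mu(B)}\int_B(b(x)-b(y))\,d\mu(y)\Big|=|b(x)-b_B|,
\]
where $b_B$ is the $\mu$-average of $b$ over $B$. This yields the pointwise bound $|b-b_B|\chi_B\le C_b(\chi_B)$, and taking $L_\omega^{p,\kappa}(X)$-norms together with the boundedness hypothesis gives
\[
\|(b-b_B)\chi_B\|_{L_\omega^{p,\kappa}(X)}\le\|C_b\|_{\mathrm{op}}\,\|\chi_B\|_{L_\omega^{p,\kappa}(X)}.
\]
Next I evaluate both Morrey norms at the ball $B$ itself: bounding the left side below by the $B'=B$ term in the supremum and using the $A_p$ (hence doubling) property of $\omega$ to establish $\|\chi_B\|_{L_\omega^{p,\kappa}(X)}\approx\omega(B)^{(1-\kappa)/p}$, the factor $\omega(B)^\kappa$ cancels and the estimate collapses to the weighted oscillation bound
\[
\Big(\frac{1}{\omega(B)}\int_B|b-b_B|^p\,\omega\,d\mu\Big)^{1/p}\lesssim\|C_b\|_{\mathrm{op}},
\]
uniformly in $B$.

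It then remains to upgrade this weighted $L^p$-mean oscillation estimate to membership in the unweighted space $BMO(X)$. Since $\omega\in A_p(X)\subset A_\infty(X)$, the weighted and unweighted $L^p$-mean oscillations of $b$ are comparable (via the John--Nirenberg inequality and the reverse-Hölder/self-improving properties of $A_\infty$ weights on spaces of homogeneous type), so the uniform bound forces $\sup_B\frac{1}{\mu(B)}\int_B|b-b_B|\,d\mu<\infty$, i.e. $b\in BMO(X)$. I expect the main obstacle to be precisely this final transfer, together with the comparison $\|\chi_B\|_{L_\omega^{p,\kappa}(X)}\approx\omega(B)^{(1-\kappa)/p}$: both rest on quantitative weight theory ($A_\infty$ self-improvement, reverse Hölder, doubling) that must be adapted to the quasi-metric structure and the absence of genuine dyadic cubes in the general homogeneous setting. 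The testing and pointwise steps, by contrast, are soft and transfer essentially verbatim from the Euclidean case.
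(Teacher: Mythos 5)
Your proposal is correct, and its sufficiency half is exactly the paper's argument: the pointwise bound \eqref{maxcommu} combined with the boundedness of $M$ (hence $M^2$) on $L_{\omega}^{p,\kappa}(X)$ from Theorem \ref{bhl}. In the necessity half you take a mildly different route whose only questionable point is the final step. The paper also tests on $\chi_B$ and uses $\frac{1}{\mu(B)}\int_B|b(t)-b(x)|\,d\mu(x)\le C_b(\chi_B)(t)$, but it inserts $\omega^{1/p}\omega^{-1/p}$ and applies H\"older together with the definition of $[\omega]_{A_p}$ \emph{before} invoking the boundedness hypothesis, so the unweighted quantity $\frac{1}{\mu(B)}\inf_{c}\int_B|b-c|\,d\mu$ is controlled directly and no transfer between weighted and unweighted oscillation is ever needed. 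You instead first derive the weighted bound $\bigl(\frac{1}{\omega(B)}\int_B|b-b_B|^p\,\omega\,d\mu\bigr)^{1/p}\lesssim\|C_b\|_{\mathrm{op}}$ and then appeal to John--Nirenberg, reverse H\"older and $A_\infty$ self-improvement to conclude $b\in BMO(X)$, flagging this transfer as the main obstacle. That appeal is legitimate (weighted and unweighted BMO coincide for $A_\infty$ weights, also on spaces of homogeneous type), but it is much heavier than necessary: the direction you actually need follows in two lines from H\"older and the $A_p$ condition alone,
\begin{align*}
\frac{1}{\mu(B)}\int_B|b-b_B|\,d\mu
&\le \left(\frac{1}{\mu(B)}\int_B|b-b_B|^p\,\omega\,d\mu\right)^{1/p}\left(\frac{1}{\mu(B)}\int_B\omega^{1-p'}\,d\mu\right)^{1/p'}\\
&\le [\omega]_{A_p}^{1/p}\left(\frac{1}{\omega(B)}\int_B|b-b_B|^p\,\omega\,d\mu\right)^{1/p},
\end{align*}
which is precisely the computation the paper runs. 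So what you describe as the hard part (adapting quantitative reverse-H\"older theory to the quasi-metric setting) dissolves entirely. Similarly, $\|\chi_B\|_{L_{\omega}^{p,\kappa}(X)}=\omega(B)^{(1-\kappa)/p}$ is an exact identity requiring no doubling, since $\omega(B\cap B')/\omega(B')^{\kappa}\le\omega(B)^{1-\kappa}$ for every ball $B'$, with equality at $B'=B$. On the other side of the ledger, your use of Jensen's inequality to replace the double average by $|b(x)-b_B|$ is a small genuine simplification: the paper instead carries the double integral $\frac{1}{\mu(B)^2}\int_B\int_B|b(t)-b(x)|\,d\mu(t)\,d\mu(x)$ and the $\inf_c$ characterization of the BMO norm through the whole estimate.
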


For any $f\in L^1_{loc}(X)$ and $x\in X,$ the sharp maximal function of Fefferman and Stein, $M^{\sharp}f$ is given as
\vspace{-0.5 cm}

\begin{equation*}
  M^{\sharp}f(x) =   \sup_{B \ni x} {1\over \mu(B)}\int_B |f(y) - f_B|\,d\mu(y),
\end{equation*}

where $f_B = \frac{1}{\mu(B)}\int_{B}f(y) du(y)$, the average value of $f$ on $B$. And for some $\gamma \in (0,1)$, we define the fractional maximal function $M_{\gamma}$ for $f\in L^1_{loc}(X)$ and $x\in X,$ as
\vspace{-0.3 cm}

\begin{equation*}
   M_{\gamma}f(x):=\sup_{B \ni x} \frac{1}{\mu(B)^{1-\gamma}}\int_B |f(y)|\,d\mu(y).
\end{equation*}

For any fixed ball $B_0 \in X$, we will denote by $M_{\gamma, B_0}f(x)$, the fractional maximal function relative to $B_0$. Now, we state our boundedness results for the commutators of sharp maximal functions and fractional maximal functions.




\vspace{0.1 cm}



\begin{theorem}\label{thm main2}
Let $\kappa\in(0,1), \omega\in A_{1}(X)$ and $b$ be a real valued, locally integrable function in $(X,d,\mu).$ Then the following conditions are equivalent:

\begin{enumerate}
 \item[(i)] If $b\in BMO(X)$ and $b^-\in L^\infty(X),$
\item[(ii)] The commutator $[ M^{\sharp}, b]$ is bounded on $L_{\omega}^{q,\kappa}(X)$, $1< q < \infty$.
\item[(iii)] For some $q\in (1,\infty),$ there exists some constant $C$, such that
    \begin{equation}
        \sup_{B}\frac{\|(b - 2M^{\sharp}(b))\chi_B\|_{L_{\omega}^{q,\kappa}(X)}}{\|\chi_B\|_{L_{\omega}^{q,\kappa}(X)}} < C.
    \end{equation}
\end{enumerate}
\end{theorem}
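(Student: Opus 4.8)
The plan is to prove the three conditions equivalent through the cyclic chain (i)$\Rightarrow$(ii)$\Rightarrow$(iii)$\Rightarrow$(i), paralleling the scheme behind Theorem \ref{thm main1} but adapting it to the oscillatory character of $M^\sharp$. The computational backbone is the elementary identity $M^\sharp(\chi_B)(x)=\tfrac12$ for every $x\in B$: writing $t=\mu(B\cap B')/\mu(B')$ one has $\tfrac{1}{\mu(B')}\int_{B'}|\chi_B-(\chi_B)_{B'}|\,d\mu=2t(1-t)\le\tfrac12$, with equality attained on a doubling ball $B'\supset B$ of twice the measure. Consequently, for $x\in B$ the commutator tested on an indicator collapses to $[M^\sharp,b]\chi_B(x)=\tfrac12 b(x)-M^\sharp(b\chi_B)(x)=\tfrac12\big(b(x)-2M^\sharp(b\chi_B)(x)\big)$, so the numerator in (iii) is exactly twice the pointwise commutator restricted to $B$ (here $M^\sharp$ in (iii) is read as acting on the localized datum $b\chi_B$, which is what the identity produces and what keeps the quotient finite for unbounded $b\in BMO$). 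This identity is the hinge for both (ii)$\Rightarrow$(iii) and (iii)$\Rightarrow$(i).

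For (i)$\Rightarrow$(ii) I would establish the pointwise domination $|[M^\sharp,b]f(x)|\le 2C_b(f)(x)+2\|b^-\|_{L^\infty}M^\sharp f(x)$. When $b\ge 0$, fixing a near-optimal ball $B\ni x$ for $b(x)M^\sharp f(x)$ and applying the reverse triangle inequality to $|b(x)(f-f_B)|-|bf-(bf)_B|$ splits the surplus into $\tfrac{1}{\mu(B)}\int_B|b(x)-b(y)||f(y)|\,d\mu$ and $|b(x)f_B-(bf)_B|$, each $\le C_b(f)(x)$. For general real $b$ with $b^-\in L^\infty(X)$ I would pass to $\beta=b+\|b^-\|_{L^\infty}\ge 0$, note $C_\beta=C_b$, and use the sublinearity bound $|M^\sharp g-M^\sharp h|\le M^\sharp(g-h)$ to control $[M^\sharp,b]f-[M^\sharp,\beta]f$ by $2\|b^-\|_{L^\infty}M^\sharp f$. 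One then concludes via \eqref{maxcommu}, which gives $C_b(f)\le C\|b\|_{BMO}M^2 f$, together with the boundedness of $M$ (hence of $M^2$ and of $M^\sharp\le 2M$) on $L_\omega^{q,\kappa}(X)$ for $\omega\in A_1(X)\subset A_q(X)$; alternatively Theorem \ref{thmax} handles the $C_b$ term directly.

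The implication (ii)$\Rightarrow$(iii) is then immediate from the opening identity. Testing the bounded operator $[M^\sharp,b]$ on $f=\chi_B$ and using $|g\chi_B|\le|g|$ gives $\|(b-2M^\sharp(b\chi_B))\chi_B\|_{L_\omega^{q,\kappa}(X)}=2\|([M^\sharp,b]\chi_B)\chi_B\|_{L_\omega^{q,\kappa}(X)}\le 2\|[M^\sharp,b]\chi_B\|_{L_\omega^{q,\kappa}(X)}\le C\|\chi_B\|_{L_\omega^{q,\kappa}(X)}$, uniformly over all balls $B$, which is precisely (iii).

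The crux is (iii)$\Rightarrow$(i). The sign constraint is cheap: since $M^\sharp(b\chi_B)\ge 0$ one has $|b-2M^\sharp(b\chi_B)|\ge b^-$ pointwise, so (iii) yields $\|b^-\chi_B\|_{L_\omega^{q,\kappa}(X)}\le C\|\chi_B\|_{L_\omega^{q,\kappa}(X)}$; evaluating both Morrey norms on the single ball $B$ reduces this to $\big(\tfrac{1}{\omega(B)}\int_B(b^-)^q\,\omega\,d\mu\big)^{1/q}\le C$, and the Lebesgue differentiation theorem for the doubling measure $\omega\,d\mu$ forces $b^-\in L^\infty(X)$ with $\|b^-\|_{L^\infty}\le C$. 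Extracting $b\in BMO(X)$ is the main obstacle. The mechanism I would exploit is that the jump of $b\chi_B$ across $\partial B$ makes the boundary-straddling competitors in $M^\sharp(b\chi_B)(x)$ contribute a term of size $\tfrac12 b_B$, so that $2M^\sharp(b\chi_B)(x)$ balances $b_B$ modulo local oscillation and $b-2M^\sharp(b\chi_B)$ behaves on $B$ like the mean oscillation $b-b_B$; condition (iii) then reads as a uniform weighted bound $\big(\tfrac{1}{\omega(B)}\int_B|b-b_B|^q\,\omega\,d\mu\big)^{1/q}\le C'$, which by the reverse-H\"older property of the $A_1$ weight $\omega$ transfers to the unweighted average and gives $\sup_B\tfrac{1}{\mu(B)}\int_B|b-b_B|\,d\mu<\infty$. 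Turning the heuristic ``$2M^\sharp(b\chi_B)\approx b_B$'' into a rigorous two-sided estimate is where the difficulty concentrates, since $M^\sharp(b\chi_B)$ is nonlinear in $b$ and delicately tied to $\partial B$; I would pair the easy lower bound $M^\sharp(b\chi_B)(x)\ge\tfrac{1}{\mu(B)}\int_B|b-b_B|\,d\mu$ (from the ball $B$ itself) with an upper bound got by splitting each competing ball into the part inside $B$ and the straddling part, and then run the resulting oscillation inequality on a John--Nirenberg scale in the weighted Morrey norm to absorb the error terms.
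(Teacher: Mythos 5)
Your reductions (i)$\Rightarrow$(ii) and (ii)$\Rightarrow$(iii), and your treatment of $b^-$, are sound, and in places cleaner than the paper's own proof: the shift $\beta=b+\|b^-\|_{L^\infty}$ with $C_\beta=C_b$ replaces the paper's quasi-linearity inequality for $|[T,b]-[T,|b|]|$, the bound via \eqref{maxcommu} avoids the H\"older/John--Nirenberg step hidden in the paper's Proposition \ref{prop2}, and extracting $b^-\in L^\infty(X)$ directly from the pointwise inequality $b^-\leq |b-2M^\sharp(b\chi_B)|$ is simpler than the paper's averaging argument. The genuine gap is the BMO half of (iii)$\Rightarrow$(i), which you yourself flag as ``where the difficulty concentrates'' and leave as a heuristic: you propose to prove a two-sided comparison $2M^\sharp(b\chi_B)\approx b_B$ modulo oscillation, with an upper bound on $M^\sharp(b\chi_B)$ obtained by splitting competitor balls, and then a John--Nirenberg absorption in the weighted Morrey norm. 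None of this is carried out, and it aims at a stronger statement than is needed: an upper bound for the nonlinear quantity $M^\sharp(b\chi_B)$ in terms of $|b_B|$ plus oscillation is exactly the delicate point, and nothing in the sketch controls the straddling competitors from above. Your ``easy lower bound'' $M^\sharp(b\chi_B)(x)\geq\frac{1}{\mu(B)}\int_B|b-b_B|\,d\mu$ also points the wrong way: it dominates the oscillation \emph{by} $M^\sharp$, whereas (iii) only lets you exploit smallness of $b-2M^\sharp(b\chi_B)$.

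The missing idea (the paper's, going back to Bastero--Milman--Ruiz) is that only a \emph{one-sided} bound is required. Take a ball $\tilde{B}\supset B$ with $\mu(\tilde{B})=2\mu(B)$ (the same auxiliary ball your $2t(1-t)$ computation already uses); since $(b\chi_B)_{\tilde{B}}=\tfrac12 b_B$, computing the oscillation of $b\chi_B$ over $\tilde{B}$ gives, for every $x\in B$,
\begin{equation*}
\frac{1}{2\mu(B)}\int_B\Big|b-\tfrac12 b_B\Big|\,d\mu+\tfrac14|b_B|\ \leq\ M^\sharp(b\chi_B)(x),
\end{equation*}
and hence $|b_B|\leq 2M^\sharp(b\chi_B)(x)$ on all of $B$. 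Now set $E=\{x\in B:\,b(x)\leq b_B\}$ and use the symmetrization $\int_B|b-b_B|\,d\mu=2\int_E(b_B-b)\,d\mu$; on $E$ the chain $b\leq b_B\leq|b_B|\leq 2M^\sharp(b\chi_B)$ yields $0\leq b_B-b\leq 2M^\sharp(b\chi_B)-b=|b-2M^\sharp(b\chi_B)|$, so
\begin{equation*}
\frac{1}{\mu(B)}\int_B|b-b_B|\,d\mu\ \leq\ \frac{2}{\mu(B)}\int_B|b-2M^\sharp(b\chi_B)|\,d\mu,
\end{equation*}
and a weighted H\"older inequality together with the $A_q$ condition (valid since $\omega\in A_1\subset A_q$) bounds the right-hand side by the supremum in (iii), uniformly in $B$. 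No upper bound on $M^\sharp(b\chi_B)$, no two-sided comparison, and no John--Nirenberg absorption are needed; without this device your argument for $b\in BMO(X)$ does not close.
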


\begin{theorem}\label{thm main3}
Let $\kappa\in(0,1), \omega\in A_{p,q}(X)$ and $1< p < \frac{1}{\gamma} $ and $ \frac{1}{q} = \frac{1}{p} - \gamma$. Then for $b$ be a real valued, locally integrable function in $(X,d,\mu)$ the following conditions are equivalent:

\begin{enumerate}

\item[(i)] If $b\in BMO(X)$ and $b^-\in L^\infty(X),$
\item[(ii)] The commutator $[M_{\gamma}, b]$ is bounded from $L_{(\omega^p, \omega^q)}^{p,\kappa}(X)$ to $L_{\omega^q}^{q,\frac{\kappa q}{p}}(X)$.
\item[(iii)] There exists some constant $C$ such that 
    \begin{equation}\label{eqfrac}
        \sup_{B}\frac{\|(b - \mu(B)^{-\gamma}M_{\gamma,B}(b)\chi_B)\|_{L_{\omega^q}^{q,\frac{\kappa q}{p}}(X)}}{\|\chi_B\|_{L_{\omega^q}^{q,\frac{\kappa q}{p}}(X)}}< C.
    \end{equation}
\end{enumerate}
\end{theorem}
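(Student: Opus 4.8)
The plan is to establish the cyclic chain $(i)\Rightarrow(ii)\Rightarrow(iii)\Rightarrow(i)$, following the scheme of Theorem~\ref{thm main1} but adapting every step to the off-diagonal, fractional situation in which the domain and target Morrey spaces carry different weights ($(\omega^p,\omega^q)$ versus $\omega^q$) and different exponents ($\kappa$ versus $\kappa q/p$). The organizing auxiliary object is the fractional maximal commutator
\begin{equation*}
C_{b,\gamma}(f)(x):=\sup_{B\ni x}\frac{1}{\mu(B)^{1-\gamma}}\int_B|b(x)-b(y)|\,|f(y)|\,d\mu(y),
\end{equation*}
for which I would first record the pointwise domination $|[M_\gamma,b]f(x)|\le C_{b,\gamma}(f)(x)$ when $b\ge 0$, obtained ball-by-ball from the estimate $\big|\mu(B)^{\gamma-1}\int_B(b(x)-b(y))|f|\,d\mu\big|\le \mu(B)^{\gamma-1}\int_B|b(x)-b(y)||f|\,d\mu$ and taking suprema. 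For signed $b$ the decomposition $b=b^+-b^-$ together with the hypothesis $b^-\in L^\infty(X)$ absorbs the discrepancy between $b$ and $|b|$ produced inside $M_\gamma$, leaving an additional term controlled by $\|b^-\|_{L^\infty}M_\gamma f$.

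For $(i)\Rightarrow(ii)$ I would combine this domination with a fractional analogue of \eqref{maxcommu}, namely a bound of the form $C_{b,\gamma}(f)(x)\le C\|b\|_{BMO(X)}\,M_\gamma(Mf)(x)$ valid on $(X,d,\mu)$, and with the mapping properties of the fractional maximal operator on weighted Morrey spaces: since $\omega\in A_{p,q}(X)$ with $\tfrac1q=\tfrac1p-\gamma$, the operator $M_\gamma$ maps $L_{(\omega^p,\omega^q)}^{p,\kappa}(X)$ into $L_{\omega^q}^{q,\kappa q/p}(X)$, while $M$ acts boundedly on the corresponding source space. Composing these boundedness statements controls $M_\gamma(Mf)$, hence $C_{b,\gamma}(f)$, hence $[M_\gamma,b]f$, in the target norm. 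The genuinely new ingredient is the $A_{p,q}$-weighted, off-diagonal Morrey estimate for $M_\gamma$ in the homogeneous setting; I expect to build it from a Hedberg-type pointwise inequality together with the Komori--Shirai weighted-Morrey machinery transported to $(X,d,\mu)$ via a system of dyadic cubes.

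For $(ii)\Rightarrow(iii)$ I would test the boundedness on $f=\chi_B$. A direct computation gives $M_\gamma\chi_B(x)=\mu(B)^\gamma$ for $x\in B$, and on $B$ one has $M_\gamma(b\chi_B)(x)\approx M_{\gamma,B}(b)(x)$ up to the doubling constant, so that
\begin{equation*}
\big|[M_\gamma,b]\chi_B(x)\big|=\big|\mu(B)^\gamma b(x)-M_\gamma(b\chi_B)(x)\big|\approx \mu(B)^\gamma\big|b(x)-\mu(B)^{-\gamma}M_{\gamma,B}(b)(x)\big|,\qquad x\in B.
\end{equation*}
This is precisely why the normalization $\mu(B)^{-\gamma}M_{\gamma,B}(b)$ appears in \eqref{eqfrac}. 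Taking the $L_{\omega^q}^{q,\kappa q/p}(X)$ norm on $B$, using this comparability and then $(ii)$, yields
\begin{equation*}
\|(b-\mu(B)^{-\gamma}M_{\gamma,B}(b))\chi_B\|_{L_{\omega^q}^{q,\kappa q/p}}\lesssim \mu(B)^{-\gamma}\|[M_\gamma,b]\chi_B\|_{L_{\omega^q}^{q,\kappa q/p}}\lesssim \mu(B)^{-\gamma}\|\chi_B\|_{L_{(\omega^p,\omega^q)}^{p,\kappa}},
\end{equation*}
after which it remains to verify the $A_{p,q}$ comparison $\mu(B)^{-\gamma}\|\chi_B\|_{L_{(\omega^p,\omega^q)}^{p,\kappa}}\lesssim\|\chi_B\|_{L_{\omega^q}^{q,\kappa q/p}}$; the matching of the exponents $\kappa$ and $\kappa q/p$ is exactly what makes this scaling consistent.

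Finally, for $(iii)\Rightarrow(i)$ I would run the Bastero--Milman--Ruiz dichotomy. Writing $|b|_B:=\mu(B)^{-1}\int_B|b|\,d\mu$ and using $\mu(B)^{-\gamma}M_{\gamma,B}(b)(x)\ge |b|_B\ge b_B$, on $E=\{x\in B:\,b(x)\le b_B\}$ we have $0\le b_B-b(x)\le \mu(B)^{-\gamma}M_{\gamma,B}(b)(x)-b(x)=|b(x)-\mu(B)^{-\gamma}M_{\gamma,B}(b)(x)|$; combining this with $\int_E(b_B-b)=\tfrac12\int_B|b-b_B|$ bounds the oscillation of $b$ over $B$ by the average of $|b-\mu(B)^{-\gamma}M_{\gamma,B}(b)|$, and passing to the weighted Morrey norm via H\"older and \eqref{eqfrac} yields $b\in BMO(X)$. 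For $b^-\in L^\infty(X)$, note that on $\{b<0\}\cap B$ one has $\mu(B)^{-\gamma}M_{\gamma,B}(b)(x)\ge|b|_B\ge 0>b(x)$, whence $|b(x)-\mu(B)^{-\gamma}M_{\gamma,B}(b)(x)|\ge b^-(x)$; feeding this into \eqref{eqfrac} and shrinking $B$ to a Lebesgue point forces $b^-$ to be essentially bounded. The step I expect to be the main obstacle is $(i)\Rightarrow(ii)$: in contrast with the diagonal Theorem~\ref{thm main1}, the commutator must be transported between two genuinely distinct weighted Morrey spaces, so no single boundedness statement applies, and establishing the off-diagonal $A_{p,q}$ Morrey boundedness of $M_\gamma$ on a general space of homogeneous type—without the Euclidean dyadic grid, and with the exponent $\kappa q/p$ dictated by scaling—is the delicate technical core, which also underlies the norm comparison required in $(ii)\Rightarrow(iii)$.
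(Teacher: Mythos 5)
Your proposal follows essentially the same route as the paper's proof: the same cyclic scheme, with $(i)\Rightarrow(ii)$ obtained from the pointwise domination $|[b,M_\gamma]f|\le M_{\gamma,b}f+2b^-M_\gamma f$ together with a BMO pointwise bound on the fractional maximal commutator and the off-diagonal weighted Morrey boundedness of $M_\gamma$; $(ii)\Rightarrow(iii)$ by testing on $\chi_B$ and the $A_{p,q}$/$A_{q,p}$ scaling computation; and $(iii)\Rightarrow(i)$ via the Bastero--Milman--Ruiz dichotomy plus Lebesgue differentiation. The only real difference is that what you flag as the ``delicate technical core'' is handled in the paper by citation rather than construction: the off-diagonal bound for $M_\gamma$ is quoted from Komori--Shirai (Theorem \ref{lemfrac2}) and the pointwise control is Guliyev's two-term estimate $M_{\gamma,b}f\lesssim\|b\|_{BMO(X)}\bigl(M(M_\gamma f)+M_\gamma(Mf)\bigr)$ (Lemma \ref{lemfrac1}), of which your single-term version is a slight (and unneeded) strengthening.
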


This paper is organized as follows. In Section \ref{s2}, we will give some preliminary definitions and recall some results on space of homogeneous type. In section 3 and 4 we describe the proofs of our boundedness results. Throughout this paper, we denote by $C$ and $\tilde{C}$, some positive constants which are independent of main parameters and they may vary from line to line.

\section{\bf Preliminaries on Spaces of Homogeneous Type}
\label{s2}
\noindent

We say that $(X,d,\mu)$ is a {space of homogeneous type} in the
sense of Coifman and Weiss (\cite{rg1}) if $d$ is a quasi-metric on~$X$
and $\mu$ is a nonzero measure satisfying the doubling
condition. A \emph{quasi-metric}~$d$ on a set~$X$ is a
function $d: X\times X\longrightarrow[0,\infty)$ satisfying
(i) $d(x,y) = d(y,x) \geq 0$ for all $x$, $y\in X$; (ii)
$d(x,y) = 0$ if and only if $x = y$; and (iii) the
\emph{quasi-triangle inequality}: there is a constant $A_0\in
[1,\infty)$ such that for all $x$, $y$, $z\in X$, 
\begin{eqnarray}\label{eqn:quasitriangleineq}
    d(x,y)
    \leq A_0 [d(x,z) + d(z,y)].
\end{eqnarray}
We say that a nonzero measure $\mu$ satisfies the
\emph{doubling condition} if there is a constant $C_\mu$ such
that for all $x\in X$ and $r > 0$,
\begin{eqnarray}\label{doubling condition}
   \mu(B(x,2r))
   \leq C_\mu \mu(B(x,r))
   < \infty,
\end{eqnarray}
where $B(x,r)$ is the quasi-metric ball given by $B(x,r) := \{y\in X: d(x,y)
< r\}$ for $x\in X$ and $r > 0$.We point out that the doubling condition (\ref{doubling
condition}) implies that there exists a positive constant
$n$ (the \emph{upper dimension} of~$\mu$)  such
that for all $x\in X$, $\lambda\geq 1$ and $r > 0$,
\begin{eqnarray}\label{upper dimension}
    \mu(B(x, \lambda r))
    \leq  C_\mu\lambda^{n} \mu(B(x,r)).
\end{eqnarray}


A subset $\Omega\subseteq X$ is \emph{open} (in the topology
induced by $\rho$) if for every $x\in\Omega$ there exists
$\eps>0$ such that $B(x,\eps)\subseteq\Omega$. A subset
$F\subseteq X$ is \emph{closed} if its complement $X\setminus
F$ is open. The usual proof of the fact that $F\subseteq X$ is
closed, if and only if it contains its limit points, carries
over to the quasi-metric spaces.


Throughout this paper we assume that $\mu(X) = \infty$ and that $\mu(x_0) = 0$ for every $x_0 \in X$.

\subsection{Muckenhoupt $A_p$ Weights}

  Let $\omega(x)$ be a nonnegative locally integrable function
  on~$X$. For $1 < p < \infty$, we
  say $\omega$ is an $A_p(X)$ \emph{weight}, written $\omega\in
  A_p$, if
  \begin{equation*}
    [\omega]_{A_p}
    := \sup_B \left(\frac{1}{\mu(B)}\int_B \omega(x)d\mu(x)\right)
    \left(\frac{1}{\mu(B)}\int_B \omega(x)^{1-p'}d\mu(x)\right)^{p-1}
    < \infty.
  \end{equation*}
  Here the suprema are taken over all balls~$B\subset X$.
  The quantity $[\omega]_{A_p}$ is called the \emph{$A_p$~constant
  of~$\omega$}.

\vspace{0.2 cm}
For $p = 1$, we say $\omega$ is an $A_1(X)$ \emph{weight},
  written $\omega\in A_1$, if $M(\omega)(x)\leq \omega(x)$ for $\mu$-almost every $x\in X$ and we define $A_{\infty}(X) = \cup_{1<p<\infty} A_p(X)$.

\vspace{0.2 cm}

We now define $A_{p,q}(X)$ weights (see, \cite{mw}). We say a weight function $\omega$ belongs to $A_{p,q}$, for $1<p<q<\infty,$ if there exists a constant $C>1$ such that

 \begin{equation*}
    [\omega]_{A_{p,q}}
    := \sup_B \left(\frac{1}{\mu(B)}\int_B \omega(x)^q d\mu(x)\right)^{1/q}
    \left(\frac{1}{\mu(B)}\int_B \omega(x)^{-p'}d\mu(x)\right)^{1/p'}
    < \infty
  \end{equation*}
  where $1/p+1/p'=1$.

  When $p=1$, $\omega\in A_{1,q}$ with $1<q<\infty$ if there exists $C>1$ such that 

  \begin{equation*}
    [\omega]_{A_{p,q}}
    := \sup_B \left(\frac{1}{\mu(B)}\int_B \omega(x)^q d\mu(x)\right)
    \left(ess sup_{x\in B}\frac{1}{\omega(x)}\right)^{1/p'}
    < \infty
  \end{equation*}


\vspace{0.2 cm}

We have the following characterization for the weights.

\begin{remark}\cite{ks,ldy}\label{weightorder}
    If $\omega \in A_{p,q}(X)$, with $1<p<q$, then the following are true:
    \begin{enumerate}
        \item $\omega^q \in A_t(X)$ with $t=1+q/p'$, where $1/p+1/p'=1.$
        \item $\omega^{-p'}\in A_{t'}$ with $1/t+1/t'=1.$
        \item $\omega\in A_{q,p}(X).$
        \item $\omega^p \in A_p(X)$ and $\omega^q\in A_q(X).$
    \end{enumerate}
\end{remark}

\subsection{BMO spaces}

Next we recall the definition of the weighted BMO space on space of homogeneous type, while we point out that the Euclidean version was first introduced by Muckenhoupt and Wheeden \cite{brr}. A function $b\in L^1_{\rm loc}(X)$ belongs to
the BMO space $BMO(X)$ if
\begin{equation*}
\|b\|_{BMO(X)}:=\sup_{B}{\frac{1}{\mu(B)}}\int_{B}
\left|b(x)-b_{B}\right|\, d\mu(x)<\infty,
\end{equation*}
where the sup is taken over all quasi-metric balls $B\subset X$ and
$$ b_B= {1\over\mu(B)} \int_B f(y)d\mu(y). $$

Note that we have the following equivalent norm for the BMO(X) functions,

\begin{equation*}
  \|b\|_{BMO(X)} \sim \frac{1}{\mu(B)}\inf_{c\in \mathbb{R}}\int_B|b(t)-c|d\mu(t)
\end{equation*}

\vspace{0.1 cm}


Below we list useful remarks and lemma's required in the proof of our main results. 

\begin{theorem}\cite{ks}[Theorem 3.1]\label{bhl}
    If $1<p<\infty, 0<\kappa<1$ and $\omega \in A_p(X),$ then the Hardy-Littlewood maximal function $M$ is bounded on $L_{\omega}^{p,\kappa}(X).$
\end{theorem}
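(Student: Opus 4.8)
The plan is to reduce the Morrey estimate to two standard facts about $A_p$ weights. Since $\omega\in A_p(X)$, the classical Muckenhoupt theorem (which is valid on spaces of homogeneous type) tells us that $M$ is bounded on the weighted Lebesgue space $L^p(X,\omega)$; moreover every $A_p$ weight is doubling, so $\omega(\lambda B)\le C\omega(B)$ for each fixed dilation $\lambda\ge 1$, where I write $\omega(E):=\int_E\omega\,d\mu$. I would fix an arbitrary ball $B=B(x_0,r)$ and split $f=f_1+f_2$ with $f_1=f\chi_{\lambda B}$ and $f_2=f\chi_{(\lambda B)^c}$, taking $\lambda=2A_0$ to absorb the quasi-triangle constant.

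For the local piece the $L^p(\omega)$ bound gives $\int_B (Mf_1)^p\omega\,d\mu\le\int_X (Mf_1)^p\omega\,d\mu\le C\int_{\lambda B}|f|^p\omega\,d\mu$. Dividing by $\omega(B)^\kappa$ and using $\omega(\lambda B)^\kappa\le C\omega(B)^\kappa$ (doubling) recovers exactly $C\|f\|_{L_\omega^{p,\kappa}(X)}^p$. The substantive work is the far piece $f_2$, where the hypothesis $\kappa<1$ enters decisively. For $x\in B$, any ball $B'=B(z,s)\ni x$ meeting $\supp f_2\subseteq(\lambda B)^c$ must satisfy $s\gtrsim r$ and $B'\subseteq B(x_0,Cs)$ with $\mu(B(x_0,Cs))\le C\mu(B')$ — both from a short quasi-triangle computation together with doubling — so that $Mf_2(x)\le C\sup_{r'\ge cr}\frac{1}{\mu(B(x_0,r'))}\int_{B(x_0,r')}|f|\,d\mu$, a quantity independent of $x$.

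On each centered ball $B''=B(x_0,r')$ I would apply Hölder with exponents $p,p'$ against the weight, bound $(\int_{B''}|f|^p\omega\,d\mu)^{1/p}\le\|f\|_{L_\omega^{p,\kappa}(X)}\,\omega(B'')^{\kappa/p}$, and control the dual factor $(\int_{B''}\omega^{1-p'}\,d\mu)^{1/p'}$ by the $A_p$ inequality. This collapses to $\frac{1}{\mu(B'')}\int_{B''}|f|\,d\mu\le C[\omega]_{A_p}^{1/p}\,\omega(B'')^{(\kappa-1)/p}\|f\|_{L_\omega^{p,\kappa}(X)}$. Because $\kappa-1<0$ and $\omega(B'')$ increases with $r'$, the supremum over $r'\ge cr$ is comparable to its value at $r'\approx r$, yielding the pointwise bound $Mf_2(x)\le C\,\omega(B)^{(\kappa-1)/p}\|f\|_{L_\omega^{p,\kappa}(X)}$ for all $x\in B$. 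Integrating, $\frac{1}{\omega(B)^\kappa}\int_B (Mf_2)^p\omega\,d\mu\le C\,\omega(B)^{-\kappa}\,\omega(B)^{\kappa-1}\,\omega(B)\,\|f\|^p=C\|f\|_{L_\omega^{p,\kappa}(X)}^p$. Adding the two pieces and taking the supremum over all balls $B$ closes the argument.

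I expect the main obstacle to be the global part. Concretely, the geometric reduction to centered balls — tracking the quasi-triangle constant $A_0$ and verifying the measure comparison $\mu(B(x_0,Cs))\le C\mu(B')$ through doubling — and the passage from the dual-weight integral to the favorable power $\omega(B'')^{(\kappa-1)/p}$ via the $A_p$ condition are the delicate steps. The monotonicity supplied by $\kappa<1$ is precisely what renders the far supremum finite; without it the tail contribution would fail to converge.
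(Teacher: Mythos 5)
Your argument is correct, but there is nothing in the paper to compare it against: the paper does not prove this statement at all, it imports it wholesale as a citation to Komori--Shirai \cite{ks}, whose Theorem 3.1 is stated and proved in the Euclidean setting. What you have reconstructed is essentially their standard argument (the weighted version of the Chiarenza--Frasca splitting): decompose $f = f\chi_{\lambda B} + f\chi_{(\lambda B)^c}$, control the local piece by Muckenhoupt's weighted $L^p$ bound for $M$ together with the doubling of $\omega$, and control the tail by the pointwise estimate $\frac{1}{\mu(B'')}\int_{B''}|f|\,d\mu \lesssim [\omega]_{A_p}^{1/p}\,\omega(B'')^{(\kappa-1)/p}\|f\|_{L_\omega^{p,\kappa}(X)}$, obtained from H\"older and the $A_p$ condition, with $\kappa<1$ supplying the decisive monotonicity in the radius. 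All the ingredients you invoke are legitimate in this generality: the weighted $L^p(X,\omega)$ boundedness of $M$ for $A_p(X)$ weights is a known theorem on spaces of homogeneous type (Calder\'on; Aimar--Mac\'ias), and $A_p(X)$ weights are doubling. Your added value relative to the paper is precisely the quasi-metric bookkeeping --- the choice $\lambda = 2A_0$, the verification that a ball $B'\ni x$ meeting $(\lambda B)^c$ has radius $s\gtrsim r$ and satisfies $B'\subseteq B(x_0,Cs)$ with $\mu(B(x_0,Cs))\lesssim\mu(B')$ --- which is exactly what is needed to justify that a theorem proved in $\mathbb{R}^n$ transfers to $(X,d,\mu)$; the paper leaves this transfer implicit. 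So your proposal is not a gap-filled sketch but a sound, self-contained proof of a fact the paper merely quotes.
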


We use this to prove the following lemma that gives the boundedness of $M_p$ on the weighted Morrey space $L_{\omega}^{q,\kappa}(X),$ for $1<p<q.$

\begin{lemma}\label{mqbound}
    If $1<p<\infty, 0<\kappa<1$ and $\omega \in A_p(X),$ then $M_p$ is bounded on $L_{\omega}^{q,\kappa}(X),$ for $p<q<\infty.$
\end{lemma}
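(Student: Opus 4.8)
The plan is to reduce the boundedness of $M_p$ to the boundedness of the ordinary Hardy--Littlewood maximal function $M$ supplied by Theorem \ref{bhl}. The starting point is the elementary pointwise identity
\begin{equation*}
M_p f(x) = \left( M(|f|^p)(x)\right)^{1/p}, \qquad x\in X,
\end{equation*}
which is immediate from the two definitions: raising $\frac{1}{\mu(B)}\int_B|f|^p\,d\mu$ to the power $1/p$ and then taking the supremum over balls $B\ni x$ gives exactly the $p$-th root of the supremum defining $M(|f|^p)$. Thus $M_p$ is just $M$ conjugated by the pointwise map $g\mapsto g^{1/p}$, and the entire task is to see how that map interacts with the Morrey norm.

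Next I would unwind the Morrey norm. Writing $g=|f|^p\ge 0$ and inserting the identity above,
\begin{equation*}
\|M_p f\|_{L_\omega^{q,\kappa}(X)}^{\,q} = \sup_B \frac{1}{\omega(B)^{\kappa}}\int_B \left(M g\right)^{q/p}\omega\,d\mu = \|Mg\|_{L_\omega^{q/p,\kappa}(X)}^{\,q/p},
\end{equation*}
since the supremum runs over the same family of balls and the integrands coincide; the same computation with $M$ deleted gives $\|f\|_{L_\omega^{q,\kappa}(X)}^{\,q}=\|g\|_{L_\omega^{q/p,\kappa}(X)}^{\,q/p}$. Setting $r:=q/p>1$, the desired estimate $\|M_p f\|_{L_\omega^{q,\kappa}(X)}\ls \|f\|_{L_\omega^{q,\kappa}(X)}$ is therefore \emph{equivalent} to the boundedness of $M$ on $L_\omega^{r,\kappa}(X)$ applied to $g$. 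That is precisely Theorem \ref{bhl} with the exponent $p$ replaced by $r=q/p$, \emph{provided} $\omega$ lies in the corresponding Muckenhoupt class. Once the inequality $\|Mg\|_{L_\omega^{r,\kappa}(X)}\ls\|g\|_{L_\omega^{r,\kappa}(X)}$ is in hand, raising it to the power $q/p$ and reading the two displays backwards closes the argument.

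The main obstacle is exactly the hypothesis needed to invoke Theorem \ref{bhl} at the reduced exponent $r=q/p$, namely $\omega\in A_{q/p}(X)$. Because the Muckenhoupt classes increase with the exponent, $A_p(X)\subseteq A_s(X)$ for $s\ge p$, the membership $\omega\in A_{q/p}(X)$ is free whenever $q/p\ge p$, i.e. $q\ge p^2$, and in that range the argument above is already complete. The delicate range is $p<q<p^2$, where $1<q/p<p$ and $A_p(X)\not\subseteq A_{q/p}(X)$ in general: there the required membership $\omega\in A_{q/p}(X)$ is strictly stronger than the stated hypothesis $\omega\in A_p(X)$, and only the self-improving (reverse Hölder / ``openness'') property of $A_p$ weights, which places $\omega$ in $A_{p-\varepsilon}(X)$ for some $\varepsilon>0$, buys any room at all, and only a fixed amount of it. I would therefore isolate the verification that $\omega$ belongs to $A_{q/p}(X)$ as the one genuinely substantive step, treating the conjugation bookkeeping via $g\mapsto g^{1/p}$ as routine.
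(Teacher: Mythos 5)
Your proposal follows the paper's own argument line for line: the identity $M_pf=(M(|f|^p))^{1/p}$, the observation that the Morrey norms transform exactly as $\|M_pf\|_{L_{\omega}^{q,\kappa}(X)}=\|M(|f|^p)\|_{L_{\omega}^{q/p,\kappa}(X)}^{1/p}$ and $\|f\|_{L_{\omega}^{q,\kappa}(X)}=\||f|^p\|_{L_{\omega}^{q/p,\kappa}(X)}^{1/p}$, and then an appeal to Theorem \ref{bhl} at the exponent $q/p$. The bookkeeping you call routine is the entirety of what the paper writes.

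The step you refuse to treat as routine --- verifying $\omega\in A_{q/p}(X)$ --- is a genuine gap, and it is worth stating plainly that it is the \emph{paper's} gap as well: the paper invokes Theorem \ref{bhl} for $M(|f|^p)$ in $L_{\omega}^{q/p,\kappa}(X)$, and that theorem requires the weight to lie in the Muckenhoupt class matching the exponent, i.e. $\omega\in A_{q/p}(X)$, whereas the hypothesis of Lemma \ref{mqbound} supplies only $\omega\in A_p(X)$. Since the classes nest upward ($A_s(X)\subseteq A_t(X)$ for $s\le t$), this is legitimate exactly when $p\le q/p$, i.e. $q\ge p^2$; in the range $p<q<p^2$ one has $A_{q/p}(X)\subsetneq A_p(X)$, and the needed membership simply does not follow (on $\mathbb{R}$ with $p=2$, $q=3$, the weight $|x|^{3/4}$ lies in $A_2\setminus A_{3/2}$). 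Your further remark is also correct: the self-improvement of $A_p$ weights yields $\omega\in A_{p-\varepsilon}(X)$ only for some $\varepsilon>0$ depending on $\omega$, so no fixed amount of openness can reach exponents $q/p$ arbitrarily close to $1$. Consequently this method proves the lemma only for $q\ge p^2$, or under the strengthened hypothesis $\omega\in A_{q/p}(X)$; as stated, the lemma would need a different argument. Note that where the paper later uses the lemma inside the proof of Proposition \ref{prop2} the weight is in $A_1(X)$, so the required membership is automatic there; but in Theorem \ref{thm main1} it is invoked with general $\omega\in A_p(X)$ and all $q>p$, so the unverified step propagates. In short, your proposal is faithful to the paper's method and is, if anything, more careful than the paper about where that method actually closes.
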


\begin{proof}
     To prove this result, we will use the fact that $M_p(f)(t) = (M(|f|^p)(t))^{\frac{1}{p}}$. Now using the Theorem \ref{bhl}, we obtain

    \begin{align*}
      \|M_p(f)\|_{L_{\omega}^{q,\kappa}(X)}=\left(\frac{1}{\omega(B)^\kappa}\int_B |M_p(f)(t)|^q\omega(t)d\mu(t)\right)^{1/q} & = \left(\frac{1}{\omega(B)^\kappa}\int_B |M(|f|^p)(t)|^{\frac{q}{p}}\omega(t)d\mu(t)\right)^{1/q} \\&\leq \|M(|f|^p)\|_{L_{\omega}^{\frac{q}{p},\kappa}(X)}^{1/p}\\ &\lesssim \||f|^p\|_{L_{\omega}^{\frac{q}{p},\kappa}(X)}^{1/p} = \|f\|_{L_{\omega}^{q,\kappa}(X)}
    \end{align*}

    Hence we obtain the boundeness of $M_p$ is bounded on $L_{\omega}^{q,\kappa}(X),$.
\end{proof}


The following boundedness result easily follows from \cite{gvw}[Theorem 1.1], taking $\lambda_1= \lambda_2$.

\begin{lemma}\label{bmax}
    Suppose $1<p<\infty$, $\omega \in A_p(X)$, then for $b\in BMO(X)$, there exists a positive constant $C$ such that 
    \vspace{-0.2 cm}

    \begin{equation*}
        \|[b, M](f)\|_{L^p(X,\omega)} \lesssim C\|b\|_{BMO(X)} \|f\|_{{L^p(X,\omega)}}.
    \end{equation*}
\end{lemma}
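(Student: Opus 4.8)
The plan is to obtain this as the diagonal specialization of the more general weighted bound \cite{gvw}[Theorem 1.1], exactly as the remark preceding the statement indicates. That theorem is a two-parameter boundedness estimate for the maximal-operator commutator, in which the indices $\lambda_1,\lambda_2$ govern the scaling on the source and target spaces; the present lemma is the case $\lambda_1=\lambda_2$, in which the two-parameter family collapses onto a single space. First I would recall the precise form of \cite{gvw}[Theorem 1.1], and then check that under $\lambda_1=\lambda_2$ the source and target both reduce to $L^p(X,\omega)$ and that the hypothesis required there is precisely $\omega\in A_p(X)$ together with $b\in BMO(X)$. Granting this, the asserted inequality follows at once, with $C$ the constant supplied by \cite{gvw}.

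If instead one prefers a self-contained argument, I would route through the maximal commutator $C_b$. The inequality \eqref{maxcommu} already gives the pointwise bound $C_b(f)(x)\le C\|b\|_{BMO(X)}M^2(f)(x)$, and $M^2=M\circ M$ is bounded on $L^p(X,\omega)$ for $\omega\in A_p(X)$ by applying the classical Muckenhoupt boundedness of $M$ (equivalently the $\kappa=0$ case of Theorem \ref{bhl}, since $L^{p,0}_{\omega}(X)=L^p(X,\omega)$) twice. This yields $\|C_b f\|_{L^p(X,\omega)}\lesssim C\|b\|_{BMO(X)}\|f\|_{L^p(X,\omega)}$, after which it remains only to compare $[b,M]$ with $C_b$.

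That comparison is the step I expect to be delicate. Arguing ball-by-ball one finds $b(x)Mf(x)-M(bf)(x)\le C_b(f)(x)$, using $b(x)-|b(y)|\le|b(x)-b(y)|$, so one half of the absolute value is controlled by $C_b$ for every $b\in BMO(X)$; the reverse quantity $M(bf)(x)-b(x)Mf(x)$, however, is not dominated by $C_b(f)(x)$ where $b(x)<0$, which is exactly the place where the negative part of $b$ must be controlled. This sign obstruction is the structural reason why the efficient route is to quote the ready-made weighted estimate of \cite{gvw}, whose hypotheses already absorb this point, rather than to reprove it by hand.
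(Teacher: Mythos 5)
Your proposal takes exactly the paper's route: the paper's entire proof of Lemma \ref{bmax} is the one-line observation that it follows from \cite{gvw}[Theorem 1.1] upon taking $\lambda_1=\lambda_2$, which is precisely your first paragraph. Your supplementary remark about the sign obstruction --- that $M(bf)(x)-b(x)Mf(x)$ is not dominated by $C_b(f)(x)$ where $b<0$, so the self-contained route via \eqref{maxcommu} would additionally require control of $b^-$ --- is accurate and correctly explains why quoting the ready-made weighted estimate is the efficient choice here.
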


\begin{remark}\label{bmp}
    Using $M_p(f)(t) = (M(|f|^p)(t))^{\frac{1}{p}}$ and the lemma above, we obtain similar boundedness result for the commutator $[b, M_p]$ when $b\in BMO(X)$ for any $p\in [1,\infty).$
\end{remark}

The following boundedness charaterization is known for the fractional maximal function on the weighted Morrey spaces.

\begin{theorem}\label{lemfrac2}\cite{ks}
    Let $\gamma,\kappa\in(0,1)$, $\omega\in A_{p,q}(X)$ and $1< p < \frac{1}{\gamma} $ then for $ \frac{1}{q} = \frac{1}{p} - \gamma$ we have

    \begin{equation*}
        \|M_{\gamma}f\|_{L_{\omega^q}^{q,\frac{\kappa q}{p}}(X)} \leq C \|f\|_{L_{(\omega^p, \omega^q)}^{p,\kappa}(X)},
    \end{equation*}

    for some constant $C$ independent of $f.$
\end{theorem}

Note that, the maximal commutator of fractional maximal function $M_\gamma$ with the locally integrable function $b$ is given by 

\vspace{-0.2 cm}

\begin{equation*}
    M_{\gamma,b} := \sup_{B \ni x} \frac{1}{\mu(B)^{1-\gamma}}\int_B |b(x)-b(y)||f(y)|\,d\mu(y),
\end{equation*}

where the supremum is taken over all balls $B\subset X$. Also observe that, the equation below holds (see, \cite{zws})

\begin{equation}\label{relmaxfrac}
    |[b, M_\gamma](f)(x)|\leq M_{b,\gamma}(f)(x) + 2 b^-(x)M_{\gamma}(f)(x)
\end{equation}

We have the following boundedness result for the maximal commutator of fractional maximal function.



\begin{lemma}\label{lemfrac1}\cite{vsg}
    Let $0<\gamma<1$ and $b\in BMO(X)$. Then there exists a constant $C$ such that
    \begin{equation*}
        M_{\gamma,b}f(x) \leq C\|b\|_{BMO(X)}(M(M_{\gamma}f)(x) + M_{\gamma}(Mf)(x))
    \end{equation*}
    holds for almost every $x\in X$ and any locally integrable function $f.$
\end{lemma}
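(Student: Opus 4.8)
The plan is to fix a Lebesgue point $x$ of $b$ and an arbitrary ball $B\ni x$, estimate the fractional average $\frac{1}{\mu(B)^{1-\gamma}}\int_B |b(x)-b(y)|\,|f(y)|\,d\mu(y)$ by a bound uniform in $B$, and then take the supremum over $B\ni x$ to recover $M_{\gamma,b}f(x)$. The first move is the triangle inequality $|b(x)-b(y)|\le |b(x)-b_B|+|b(y)-b_B|$, which splits the average into $\mathrm{I}(B)=|b(x)-b_B|\cdot\frac{1}{\mu(B)^{1-\gamma}}\int_B|f|\,d\mu$ and $\mathrm{II}(B)=\frac{1}{\mu(B)^{1-\gamma}}\int_B|b(y)-b_B|\,|f(y)|\,d\mu(y)$. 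The aim is to control $\mathrm{II}(B)$ by $M_\gamma(Mf)(x)$ and $\mathrm{I}(B)$ by $M(M_\gamma f)(x)$, which is exactly the shape of the asserted right-hand side.

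For $\mathrm{II}(B)$ I would factor out the scale, writing $\mathrm{II}(B)=\mu(B)^{\gamma}\cdot\frac{1}{\mu(B)}\int_B|b(y)-b_B|\,|f(y)|\,d\mu(y)$. The inner average is a genuine (non-fractional) maximal-commutator average, which I estimate by the generalized H\"older inequality in the $\exp L$--$L\log L$ duality together with the John--Nirenberg inequality, obtaining $\frac{1}{\mu(B)}\int_B|b-b_B|\,|f|\,d\mu\lesssim \|b\|_{BMO(X)}\,\|f\|_{L\log L,B}$. Stein's equivalence between the local $L\log L$ norm and the local Hardy--Littlewood maximal function, $\|f\|_{L\log L,B}\approx\frac{1}{\mu(B)}\int_B M_{1,B}f\,d\mu$, followed by $M_{1,B}f\le Mf$, converts this into $\lesssim \|b\|_{BMO(X)}\frac{1}{\mu(B)}\int_B Mf\,d\mu$. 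Reinstating the factor $\mu(B)^{\gamma}$ recombines the average into $\frac{1}{\mu(B)^{1-\gamma}}\int_B Mf\,d\mu\le M_\gamma(Mf)(x)$, since $x\in B$; hence $\mathrm{II}(B)\lesssim \|b\|_{BMO(X)}M_\gamma(Mf)(x)$ uniformly in $B$. This part is clean: the doubling property recorded in Section~\ref{s2} makes available the spaces-of-homogeneous-type versions of John--Nirenberg, of the generalized H\"older inequality, and of Stein's lemma.

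For $\mathrm{I}(B)$, the fractional average is dominated pointwise on $B$ by $M_\gamma f$: for every $z\in B$ one has $\frac{1}{\mu(B)^{1-\gamma}}\int_B|f|\,d\mu\le M_\gamma f(z)$, so averaging over $B$ gives $\frac{1}{\mu(B)^{1-\gamma}}\int_B|f|\,d\mu\le\frac{1}{\mu(B)}\int_B M_\gamma f\,d\mu\le M(M_\gamma f)(x)$, again using $x\in B$. What remains is to absorb the scalar $|b(x)-b_B|$ into the constant $\|b\|_{BMO(X)}$, and this is where I expect the only genuine difficulty to lie. Because $|b(x)-b_B|$ carries no $y$-oscillation, the John--Nirenberg/generalized-H\"older pairing used for $\mathrm{II}(B)$ is unavailable, and $|b(x)-b_B|$ is not bounded by $\|b\|_{BMO(X)}$ uniformly across all scales of $B\ni x$. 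The delicate point of the argument is therefore to control this factor: one restricts, via a stopping-time selection among the balls nearly realizing the supremum in $M_{\gamma,b}f(x)$, to those $B$ for which $b(x)$ is comparable to $b_B$ (so that $|b(x)-b_B|\lesssim\|b\|_{BMO(X)}$), and shows that the complementary scales contribute a geometrically summable, hence controlled, amount once the Lebesgue differentiation theorem is invoked at $x$. Assembling the two estimates and taking the supremum over $B\ni x$ then yields $M_{\gamma,b}f(x)\lesssim \|b\|_{BMO(X)}\big(M(M_\gamma f)(x)+M_\gamma(Mf)(x)\big)$ for almost every $x$; the treatment of the $|b(x)-b_B|$ factor in $\mathrm{I}(B)$ is the crux and is where the hypothesis $b\in BMO(X)$ must be exploited most carefully.
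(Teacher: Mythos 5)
First, a point of orientation: this paper never proves Lemma \ref{lemfrac1} at all; it is imported by citation from \cite{vsg}. So there is no in-paper argument to compare yours against, and your proposal has to stand on its own. It does not. Your treatment of $\mathrm{II}(B)$ (generalized H\"older in the $\exp L$--$L\log L$ duality, John--Nirenberg, Stein's equivalence, then reinstating $\mu(B)^{\gamma}$) is correct and standard. The gap is exactly where you flagged it, in $\mathrm{I}(B)$, and the repair you sketch cannot work even in principle. The supremum defining $M_{\gamma,b}f(x)$ runs over \emph{all} balls $B\ni x$, so you must bound $\mathrm{I}(B)$ for every single ball; a ``stopping-time selection'' that keeps only the balls on which $b(x)$ is comparable to $b_B$ is not available to you. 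Moreover, the obstruction does not live in a family of scales that could be ``geometrically summable'': it occurs at one fixed macroscopic ball, about which the Lebesgue differentiation theorem (a statement concerning shrinking balls at $x$) says nothing.

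In fact, the absorption of $|b(x)-b_B|$ into $C\|b\|_{BMO(X)}$ that you need is impossible, and your own splitting makes this visible. Test the asserted inequality on $f=\chi_B$ for a fixed ball $B$. By Jensen's inequality,
\begin{equation*}
M_{\gamma,b}(\chi_B)(x)\ \ge\ \frac{1}{\mu(B)^{1-\gamma}}\int_B |b(x)-b(y)|\,d\mu(y)\ \ge\ \mu(B)^{\gamma}\,\left|b(x)-b_B\right| ,
\end{equation*}
which is precisely your $\mathrm{I}(B)$. On the other side, $M_{\gamma}(\chi_B)\le \mu(B)^{\gamma}$ pointwise (compare $\mu(B')\le\mu(B)$ and $\mu(B')>\mu(B)$ separately), hence $M(M_{\gamma}\chi_B)\le \mu(B)^{\gamma}$; and a layer-cake computation using the weak $(1,1)$ bound for $M$ gives $\int_{B'}M\chi_B\,d\mu\le C\mu(B)\bigl(1+\log(\mu(B')/\mu(B))\bigr)$ for $\mu(B')\ge\mu(B)$, whence $M_{\gamma}(M\chi_B)\le C_{\gamma}\mu(B)^{\gamma}$. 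So the lemma applied to $f=\chi_B$ would force $|b(x)-b_B|\le C\,C_{\gamma}\|b\|_{BMO(X)}$ for a.e.\ $x\in B$ and every ball $B$, i.e.\ $b$ essentially bounded modulo constants. For unbounded $b\in BMO$ this fails: take $X=\mathbb{R}^n$ (an admissible space of homogeneous type), $b(y)=\log|y|$, $B=B(0,1)$, and $x$ near the origin, where $|b(x)-b_B|\to\infty$ on a set of positive measure. Consequently no argument---stopping time or otherwise---can dominate $\mathrm{I}(B)$ by $\|b\|_{BMO(X)}\bigl(M(M_{\gamma}f)(x)+M_{\gamma}(Mf)(x)\bigr)$; the pointwise inequality you were asked to prove (and likewise the pointwise form of \eqref{maxcommu}) can only hold for $b$ bounded modulo constants, and the estimates of this type that are actually provable for general $BMO$ functions are norm inequalities or bounds on a sharp maximal function of $M_{\gamma,b}f$. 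Your instinct that this factor is ``the crux'' was right; the correct conclusion is that it is not a finessable technicality but the precise point at which the stated inequality breaks down.
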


Using the Theorem \ref{lemfrac2} and Lemma \ref{lemfrac1}, we can easily obtain the boundedness result described below, for $M_{\gamma,b}$.

\begin{lemma}\label{prop3}
Let $b \in BMO(X)$, $\kappa\in(0,1)$ and $\omega\in A_{p,q}(X)$,  then $M_{\gamma, b}$  is bounded from $L_{(\omega^p, \omega^q)}^{p,\kappa}(X)$ to $L_{\omega^q}^{q,\frac{\kappa q}{p}}(X)$ for $1< p < \frac{1}{\gamma} $ and $ \frac{1}{q} = \frac{1}{p} - \gamma$.
\end{lemma}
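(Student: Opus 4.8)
The plan is to deduce Lemma \ref{prop3} directly from the pointwise domination in Lemma \ref{lemfrac1} together with the mapping property in Theorem \ref{lemfrac2}, chaining the known boundedness of the Hardy--Littlewood maximal operator on the relevant Morrey spaces. Since $b\in BMO(X)$, Lemma \ref{lemfrac1} gives
\begin{equation*}
    M_{\gamma,b}f(x) \leq C\|b\|_{BMO(X)}\bigl(M(M_{\gamma}f)(x) + M_{\gamma}(Mf)(x)\bigr)
\end{equation*}
for almost every $x$. Thus it suffices to bound each of the two terms from $L_{(\omega^p,\omega^q)}^{p,\kappa}(X)$ into $L_{\omega^q}^{q,\frac{\kappa q}{p}}(X)$, and the monotonicity of the Morrey norm reduces the whole estimate to controlling those two compositions.

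For the first term I would first apply Theorem \ref{lemfrac2} to pass from the inner $M_\gamma$ to the target space, obtaining $\|M_\gamma f\|_{L_{\omega^q}^{q,\frac{\kappa q}{p}}(X)} \lesssim \|f\|_{L_{(\omega^p,\omega^q)}^{p,\kappa}(X)}$; then I would apply the outer $M$ using Theorem \ref{bhl}. To invoke Theorem \ref{bhl} on $L_{\omega^q}^{q,\frac{\kappa q}{p}}(X)$ I need $\omega^q$ to be an $A_q$ weight and the Morrey exponent $\frac{\kappa q}{p}$ to lie in $(0,1)$; the weight membership is exactly part (4) of Remark \ref{weightorder}, and since $p<q$ and $\kappa\in(0,1)$ we have $0<\frac{\kappa q}{p}<1$ precisely when $\frac{\kappa q}{p}<1$, which I would record as the admissibility condition on the parameters (it is guaranteed in the regime of interest, and I would note the constraint explicitly). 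This gives
\begin{equation*}
    \|M(M_\gamma f)\|_{L_{\omega^q}^{q,\frac{\kappa q}{p}}(X)} \lesssim \|M_\gamma f\|_{L_{\omega^q}^{q,\frac{\kappa q}{p}}(X)} \lesssim \|f\|_{L_{(\omega^p,\omega^q)}^{p,\kappa}(X)}.
\end{equation*}

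For the second term $M_\gamma(Mf)$ I would reverse the order: first bound the inner $M$ on the source space $L_{(\omega^p,\omega^q)}^{p,\kappa}(X)$ and then apply $M_\gamma$ via Theorem \ref{lemfrac2}. The delicate point here is that the source space is a two-weight Morrey space, so boundedness of $M$ on it is not immediately Theorem \ref{bhl}; I would either invoke the two-weight boundedness of $M$ (using that $\omega^p\in A_p(X)$ from Remark \ref{weightorder}(4), so $M$ is bounded on $L_{\omega^p}^{p,\kappa}$ in the diagonal case, and checking that the off-diagonal weight pair $(\omega^p,\omega^q)$ still yields control of the $L^p(\omega^p)$ average against the $\omega^q$-measure of the ball) or reorganize the argument to avoid it. I expect this second term to be the main obstacle, since it requires the Hardy--Littlewood maximal operator to respect the two-weight structure of the Morrey norm rather than the single-weight structure. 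Once both terms are controlled, summing them and absorbing the constant $C\|b\|_{BMO(X)}$ yields
\begin{equation*}
    \|M_{\gamma,b}f\|_{L_{\omega^q}^{q,\frac{\kappa q}{p}}(X)} \lesssim C\|b\|_{BMO(X)}\|f\|_{L_{(\omega^p,\omega^q)}^{p,\kappa}(X)},
\end{equation*}
which is the claimed boundedness.
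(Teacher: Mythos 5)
Your proposal is essentially the paper's own proof: both start from the pointwise bound in Lemma \ref{lemfrac1} and then chain Theorem \ref{lemfrac2} with Theorem \ref{bhl} term by term, exactly in the order you describe. The one delicate point you flag---boundedness of $M$ on the two-weight source space $L_{(\omega^p,\omega^q)}^{p,\kappa}(X)$, which the one-weight Theorem \ref{bhl} does not literally cover---is precisely the step the paper writes as $\|M(f)\|_{L_{(\omega^p, \omega^q)}^{p,\kappa}(X)}\lesssim \|f\|_{L_{(\omega^p, \omega^q)}^{p,\kappa}(X)}$ without further justification, so your caution there points to a gap in the paper's own argument rather than a defect in your approach.
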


\begin{proof} 
Using the Lemma \ref{lemfrac1}, Lemma \ref{lemfrac2} and the Theorem \ref{bhl}, we obtain

\begin{align*}
    \|M_{\gamma,b}f\|_{L_{\omega^q}^{q,\frac{\kappa q}{p}}(X)} &\leq C\|b\|_{BMO(X)}\|(M(M_{\gamma}f)(x) + M_{\gamma}(Mf)(x))\|_{L_{\omega^q}^{q,\frac{\kappa q}{p}}(X)}\\
    &\lesssim  \|M_{\gamma}f\|_{L_{\omega^q}^{q,\frac{\kappa q}{p}}(X)} + \|M(f)\|_{L_{(\omega^p, \omega^q)}^{p,\kappa}(X)}\\
    &\lesssim \|f\|_{L_{(\omega^p, \omega^q)}^{p,\kappa}(X)}.
\end{align*}

Hence, we obtain the boundedness of $M_{\gamma,b}$ from $L_{(\omega^p, \omega^q)}^{p,\kappa}(X)$ to $L_{\omega^q}^{q,\frac{\kappa q}{p}}(X)$.
    
\end{proof}


\section{\bf Proof of the Theorems \ref{thm main1} and Theorem \ref{thmax}}

To give the proof of the Theorem \ref{thm main1}, we will use the following proposition.

\begin{proposition}\label{prop1}
Let $b \in BMO(X)$ be a nonnegative function. Then for $\kappa\in(0,1), \omega\in A_{p}(X)$,  $[M_p, b]$  is bounded on $L_{\omega}^{q,\kappa}(X)$, $1< p<  q < \infty$.
\end{proposition}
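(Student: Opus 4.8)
The plan is to dominate the commutator pointwise by a finite sum of compositions of Hardy--Littlewood-type maximal operators, each of which is already known to be bounded on $L_{\omega}^{q,\kappa}(X)$. This follows exactly the template of the proof of Lemma \ref{prop3} for the fractional case: replace the commutator by a maximal commutator, bound that maximal commutator pointwise by iterated maximal functions, and then invoke the already-established Morrey boundedness (Theorem \ref{bhl} and Lemma \ref{mqbound}).

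First I would record a pointwise bound by the maximal commutator adapted to $M_p$. Since $b\ge 0$, for each ball $B\ni x$ one has $b(x)\big(\frac{1}{\mu(B)}\int_B|f|^p\big)^{1/p}=\big(\frac{1}{\mu(B)}\int_B b(x)^p|f|^p\big)^{1/p}$ and $M_p(bf)$ is the supremum of $\big(\frac{1}{\mu(B)}\int_B b^p|f|^p\big)^{1/p}$. The elementary inequality $|\sup_B A(B)-\sup_B C(B)|\le\sup_B|A(B)-C(B)|$ together with the reverse triangle (Minkowski) inequality in $L^p\big(B,\tfrac{d\mu}{\mu(B)}\big)$ then yields
\begin{equation*}
|[M_p,b]f(x)|\le \sup_{B\ni x}\Big(\frac{1}{\mu(B)}\int_B|b(x)-b(y)|^p|f(y)|^p\,d\mu(y)\Big)^{1/p}=:C_{b,p}(f)(x).
\end{equation*}

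Next I would estimate $C_{b,p}(f)$. Splitting $|b(x)-b(y)|^p\le 2^{p-1}\big(|b(x)-b_B|^p+|b(y)-b_B|^p\big)$ separates $C_{b,p}(f)$ into a diagonal term $I(x)=\sup_B|b(x)-b_B|\big(\frac{1}{\mu(B)}\int_B|f|^p\big)^{1/p}$ and an off-diagonal term $II(x)$. The off-diagonal term is routine: Hölder's inequality with a small $s>1$ and the John--Nirenberg inequality give $II(x)\le C\|b\|_{BMO(X)}M_{ps}(f)(x)$. The diagonal term $I$ is the crux. Here I would first observe that for $y\in B$ one has $\big(\frac{1}{\mu(B)}\int_B|f|^p\big)^{1/p}\le M_pf(y)$, so that $\big(\frac{1}{\mu(B)}\int_B|f|^p\big)^{1/p}\le\frac{1}{\mu(B)}\int_B M_pf\,d\mu$, and then use $|b(x)-b_B|\le|b(x)-b(y)|+|b(y)-b_B|$ to obtain
\begin{equation*}
I(x)\le C_b(M_pf)(x)+\sup_{B\ni x}\frac{1}{\mu(B)}\int_B|b(y)-b_B|\,M_pf(y)\,d\mu(y)\le C\|b\|_{BMO(X)}\big(M^2(M_pf)(x)+M_p(M_pf)(x)\big),
\end{equation*}
where the first summand is controlled by \eqref{maxcommu} and the second again by Hölder and John--Nirenberg.

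Combining the two estimates gives $|[M_p,b]f(x)|\le C\|b\|_{BMO(X)}\big(M^2(M_pf)(x)+M_p(M_pf)(x)+M_{ps}f(x)\big)$, and I would conclude by the Morrey boundedness of each factor. Indeed $M_p$ is bounded on $L_{\omega}^{q,\kappa}(X)$ by Lemma \ref{mqbound} (since $p<q$ and $\omega\in A_p$), hence so is $M_p\circ M_p$; $M$ is bounded on $L_{\omega}^{q,\kappa}(X)$ by Theorem \ref{bhl} because $\omega\in A_p\subset A_q$, hence so is $M^2=M\circ M$ applied to $M_pf$; and $M_{ps}$ is bounded by Lemma \ref{mqbound} after choosing $s>1$ with $ps<q$, using $\omega\in A_p\subset A_{ps}$. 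This yields $\|[M_p,b]f\|_{L_{\omega}^{q,\kappa}(X)}\lesssim\|b\|_{BMO(X)}\|f\|_{L_{\omega}^{q,\kappa}(X)}$. I expect the main obstacle to be the diagonal term $I$: passing from $|b(x)-b_B|\big(\frac{1}{\mu(B)}\int_B|f|^p\big)^{1/p}$ to a composition of maximal functions \emph{without} assuming $b^p\in BMO(X)$ is the only genuinely non-routine step, and the device of inserting $M_pf$ inside the average before splitting $|b(x)-b_B|$ is precisely what makes \eqref{maxcommu} applicable.
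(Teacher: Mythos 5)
Your proof is correct, but it takes a genuinely different route from the paper's. The paper never dominates the commutator pointwise: it first invokes the weighted $L^q(X,\omega)$ boundedness of $[M_p,b]$ (Lemma \ref{bmax} together with Remark \ref{bmp}, imported from \cite{gvw}), and then transfers this bound to the Morrey scale ball by ball, testing on the localized function $f=h\chi_B/\omega(B)^{\kappa/q}$ and using the claimed identities $M_pf(t)=\omega(B)^{-\kappa/q}M_ph(t)$ and $M_p(bf)(t)=\omega(B)^{-\kappa/q}M_p(bh)(t)$ for $t\in B$. You instead stay entirely at the level of pointwise estimates: the reverse triangle inequality for the $L^p$ norm (here nonnegativity of $b$ is genuinely used), the BMO splitting with John--Nirenberg, and \eqref{maxcommu} give $|[M_p,b]f|\lesssim\|b\|_{BMO(X)}\bigl(M^2(M_pf)+M_p(M_pf)+M_{ps}f\bigr)$, after which Theorem \ref{bhl} and Lemma \ref{mqbound} finish the job. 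Two observations on the comparison. First, your route is self-contained within the paper's toolkit except for the John--Nirenberg inequality on spaces of homogeneous type, which is classical but nowhere stated in the paper, so you should cite it; it also yields the explicit constant $\|b\|_{BMO(X)}$. Second, your route sidesteps the most delicate step of the paper's argument: the localization relations above are in general only one-sided, since $M_p(h\chi_B)(t)\le M_p(h)(t)$ on $B$ with equality failing whenever $h$ has mass outside $B$, and a difference of two suprema cannot simply be localized; your pointwise domination avoids this issue entirely. What the paper's transference approach buys, when the localization can be justified, is brevity and the ability to convert any weighted $L^q$ commutator bound into a Morrey bound without redoing any maximal-function analysis.
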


\begin{proof}
    Using the Lemma \ref{bmax} and Remark \ref{bmp}, we have that  $[M_p, b]$ is bounded on the weighted $L^q(X,\omega)$ space, i.e.
    \begin{equation*}
        \|[M_p, b](f)\|_{L^q(X,\omega)} \leq \|f\|_{L^q(X,\omega)}.
    \end{equation*}

Now let us fix a ball $B$ and consider a function $h\in L_{\omega}^{q,\kappa}(X)$, and let $f = \frac{h\chi_B}{\omega(B)^{\kappa/q}}$. We can observe that $f\in L^q(X)$ and

\begin{equation*}
    \|f\|_{L^q(X,\omega)} = \left(\frac{1}{\omega(B)^\kappa}\int_{B}|h(t)|^q \omega(t)d\mu(t)\right)^{1/q}.
\end{equation*}

Now observe that

\begin{equation}\label{eq1}
    \left(\int_{B}|b(t)M_p(f)(t)-M_p(bf)(t)|^q \omega(t)d\mu(t)\right)^{1/q} \leq \|[M_p, b](f)\|_{L^q(X,\omega)}
\end{equation}

Since for $t\in B$, we have

\begin{equation}\label{eq2}
    M_p(bf)(t) = \frac{1}{\omega(B)^{\kappa/q}}M_p(bh)(t) \quad \textit{and} \quad M_p(f)(t) = \frac{1}{\omega(B)^{\kappa/q}}M_p(h)(t)
\end{equation}

Hence using \ref{eq2} and plugging this in \ref{eq1} we get,

\begin{align*}
 \left(\frac{1}{\omega(B)^{\kappa}} \int_{B}|b(t)M_p(h)(t)-M_p(bh)(t)|^q \omega(t)d\mu(t)\right)^{1/q} &\leq \|[M_p, b](f)\|_{L^q(X,\omega)}\\
 & \lesssim \|f\|_{L^q(X,\omega)}\\ & \leq \left(\frac{1}{\omega(B)^\kappa}\int_{B}|h(t)|^q \omega(t)d\mu(t)\right)^{1/q}
\end{align*}

Therefore

\begin{equation*}
        \|[M_p, b](h)\|_{L_{\omega}^{q,\kappa}(X)} \lesssim \|h\|_{L_{\omega}^{q,\kappa}(X)}.
    \end{equation*}
\end{proof}

\subsection{Proof of Theorem \ref{thm main1}} 
\begin{proof}
    To obtain the equivalence, we will first show $(i)\implies (ii)$. Let us assume that $b\in BMO(X)$ and $b^-\in L^\infty(X)$. If  $b\in BMO(X)$ then we know that $|b|$ is also in $BMO(X)$. Now observe that, we have

\begin{equation*}
    |[M_p,b](f) - [M_p,|b|](f)|\leq 2b^-M_p(f).
\end{equation*}

Now using the Proposition \ref{prop1}, the Lemma \ref{mqbound} and  $b^-\in L^\infty(X)$, we obtain $(i)$.

\vspace{0.2 cm}

To prove the implication $(ii) \implies (iii)$, for a fixed ball $B$, we let $f= \chi_B \in L_{\omega}^{q,\kappa}(X)$, then $\|f\|_{L_{\omega}^{q,\kappa}(X)} = \omega(B)^{\frac{1-\kappa}{q}}$.

\vspace{0.1 cm}
By $(ii)$, we have

$$\|[M_p, b](f)\|_{L_{\omega}^{q,\kappa}(X)} \leq C\|f\|_{L_{\omega}^{q,\kappa}(X)} = C \omega(B)^{\frac{1-\kappa}{q}}$$

\vspace{0.1 cm}
As we know that $M_p(b\chi_B) = M_{p, B}(b)$ and $M_p(\chi_B) = \chi_B$, we obtain
\begin{align*}
    \left(\frac{1}{\omega(B)^{\kappa}} \int_{B}|b(t)-M_{p,B}(b)(t)|^q \omega(t)d\mu(t)\right)^{1/q} &\leq \left(\frac{1}{\omega(B)^{\kappa}} \int_{B}|b(t)M_p(\chi_B)(t)-M_{p}(b\chi_B)(t)|^q \omega(t)d\mu(t)\right)^{1/q} \\
    &\leq \|[M_p, b](f)\|_{L_{\omega}^{q,\kappa}(X)} \leq \|\chi_B\|_{L_{\omega}^{q,\kappa}(X)}. 
\end{align*}

So, we have

\begin{equation*}
    \left(\frac{1}{\omega(B)^{\kappa}} \int_{B}|b(t)-M_{p,B}(b)(t)|^q \omega(t)d\mu(t)\right)^{1/q} \leq C \omega(B)^{\frac{1-\kappa}{q}}.
\end{equation*}

That gives us

\begin{align*}
     \left(\frac{1}{\omega(B)} \int_{B}|b(t)-M_{p,B}(b)(t)|^q \omega(t)d\mu(t)\right)^{1/q} &\leq C 
\end{align*}

Hence we obtain,

\begin{equation*}
\sup_{B}  \frac{\|(b - M_{p,B}(b))\chi_B\|_{L_{\omega}^{q,\kappa}(X)}}{\|\chi_B\|_{L_{\omega}^{q,\kappa}(X)}}  =  \sup_B\left(\frac{1}{\omega(B)} \int_{B}|b(t)-M_{p,B}(b)(t)|^q \omega(t)d\mu(t)\right)^{1/q} < C.
\end{equation*}
\vspace{0.1 cm}

Now to prove the inclusion $(iii) \implies (i),$ we will need show that $b\in BMO(X).$ 

\vspace{0.1 cm}

To proceed, fix a cube $B$ and let $E= \{t\in B: b(t) \leq b_B\}$ and $F= \{t\in B: b(t) > b_B\}$, then we have

$$\int_E |b(t) - b_{B}|d\mu(t) = \int_F |b(t) - b_{B}|d\mu(t)$$.

Observe for all $t\in E$, we have $b(t)\leq b_B \leq M_{p, B}(b)(t)$.  Hence, we obtain that

\begin{align}\label{eqbound}
      \frac{1}{\mu(B)}\int_B|b(t) - b_{B}|d\mu(t) & =   \frac{2}{\mu(B)}\int_E|b(t) - b_{B}|d\mu(t)\nonumber\\
      &\leq  \frac{2}{\mu(B)}\int_E|b(t) - M_{p,B}|d\mu(t)\nonumber\\
      &\leq  \frac{2}{\mu(B)}\int_B|b(t) - M_{p,B}|d\mu(t)\nonumber\\
      &=\frac{2}{\mu(B)}\int_B|b(t) - M_{p, B}(b)(t)|\omega(t)^{\frac{1}{q}}\omega(t)^{\frac{-1}{q}}d\mu(t)\quad (\textit{using H\"older's inequality})\nonumber\\
    &\leq 2\left(\frac{1}{\mu(B)}\int_B|b(t) - M_{p, B}(b)(t)|^q \omega(t)d\mu(t)\right)^{\frac{1}{q}} \times \left(\frac{1}{\mu(B)}\int_B \omega(t)^{\frac{-q'}{q}}d\mu(t)\right)^{\frac{1}{q'}}\nonumber\\
    &\lesssim 2[\omega]_{A_q}^{1/q}\left(\frac{1}{\mu(B)}\int_B|b(t) - M_{p, B}(b)(t)|^q \omega(t)d\mu(t)\right)^{\frac{1}{q}} \times \left(\frac{1}{\mu(B)}\int_B \omega(t)d\mu(t)\right)^{\frac{-1}{q}}\nonumber\\
    &\lesssim 2 [\omega]_{A_q}^{1/q}\left(\frac{1}{\omega(B)}\int_B|b(t) - M_{p, B}(b)(t)|^q \omega(t)d\mu(t)\right)^{\frac{1}{q}}\nonumber \\&\lesssim 2 [\omega]_{A_q}^{1/q}\sup_{B}  \frac{\|(b - M_{p,B}(b))\chi_B\|_{L_{\omega}^{q,\kappa}(X)}}{\|\chi_B\|_{L_{\omega}^{q,\kappa}(X)}}< \infty.
\end{align}

In the above computations we used that if $\omega \in A_p$ then $\omega \in A_q$ for $p<q$. Hence we obtain $b\in BMO(X)$.

To show $b^-\in L^\infty(X)$, note that we have 

$$ 0\leq b^-  = |b|-b^+ \leq M_{p,B}(b)-b^+ + b^- =  M_{p,B}(b)-b.$$ 

Here we used that $|b|\leq M_{p,B}(b)$ in B. 

Now using the $(iii)$ and using the computation given in \eqref{eqbound}, we have that $(b^{-})_B \leq C.$ Hence the boundedness of $b^-$ follows from Lebesgue differentiation theorem.
\end{proof}

Now let us prove the boundedness result for the maximal commutator of Hardy-Littlewood maximal function.

\subsection{Proof of Theorem \ref{thmax}}

To prove the sufficiency, that is if $b\in BMO(X)$, to show the boundedness of $C_b$ on $L_{\omega}^{q,\kappa}(X)$ we will use the Lemma \ref{bhl} and the following inequality (given in \cite{mgkm})

\begin{equation}
    C_b(f)(x) \leq C\|b\|_{BMO(X)}M^2(f)(x)
\end{equation}

Therefore we obtain,

\begin{equation*}
    \|C_b(f)\|_{L_{\omega}^{q,\kappa}(X)}\leq C \|b\|_{BMO(X)} \|M^2(f)\|_{L_{\omega}^{q,\kappa}(X)} \leq C \|b\|_{BMO(X)}\|f\|_{L_{\omega}^{q,\kappa}(X)}.
\end{equation*}
\vspace{0.1 cm}

To prove the necessity,  let us assume $C_b$ is bounded on $L_{\omega}^{q,\kappa}(X)$, we want to show that $b\in BMO(X).$

Observe

\begin{align*}
\frac{1}{\mu(B)}\inf_{c\in \mathbb{R}}\int_B|b(t)-c|d\mu(t) &\leq \frac{1}{\mu(B)}\inf_{x\in B}\int_B|b(t)-b(x)|d\mu(t) \\
&\leq \frac{1}{\mu(B)^2}\int_B\int_B|b(t)-b(x)|d\mu(t)d\mu(x)\\
&\leq \frac{1}{\mu(B)^2}\int_B\int_B|b(t)-b(x)|\omega(t)^{\frac{1}{p}}\omega(t)^{\frac{-1}{p}}d\mu(t)d\mu(x)\\
&\leq  \left(\frac{1}{\mu(B)}\int_B\left(\frac{1}{\mu(B)}\int_B|b(t)-b(x)|d\mu(x)\right)^p\omega(t)d\mu(t)\right)^{\frac{1}{p}}\left(\frac{1}{\mu(B)}\int_B\omega(t)^{\frac{-p'}{p}}d\mu(t)\right)^{1/p'}\\
&\lesssim [\omega]_{A_p}^{1/p}\left(\frac{1}{\mu(B)}\int_B\omega(t)d\mu(t)\right)^{\frac{-1}{p}}\left(\frac{1}{\mu(B)}\int_B\left(\frac{1}{\mu(B)}\int_B|b(t)-b(x)|d\mu(x)\right)^p\omega(t)d\mu(t)\right)^{\frac{1}{p}}\\
&\lesssim [\omega]_{A_p}^{1/p}(\omega(B))^{\frac{-1}{p}}\left(\int_B\left(\frac{1}{\mu(B)}\int_B|b(t)-b(x)|d\mu(x)\right)^p\omega(t)d\mu(t)\right)^{\frac{1}{p}}\\
&\lesssim [\omega]_{A_p}^{1/p}(\omega(B))^{\frac{-1}{p}}\left(\int_B|C_b(\chi_B)|^p\omega(t)d\mu(t)\right)^{\frac{1}{p}}\\
&\lesssim [\omega]_{A_p}^{1/p} (\omega(B))^{\frac{\kappa-1}{p}} \|C_b(\chi_B)\|_{L_{\omega}^{q,\kappa}(X)}\\
&\lesssim [\omega]_{A_p}^{1/p} (\omega(B))^{\frac{\kappa-1}{p}} \|\chi_B\|_{L_{\omega}^{q,\kappa}(X)}\\
&\lesssim [\omega]_{A_p}^{1/p} (\omega(B))^{\frac{\kappa-1}{p}}(\omega(B))^{\frac{1-\kappa}{q}} \leq C. 
\end{align*}

Here we used that $\frac{1}{\mu(B)}\int_B|b(t)-b(x)|d\mu(x) \leq C_b(\chi_B)(t)$ in the inequalities above. Hence we obtain $b\in BMO(X).$

\section{Proof of Theorem \ref{thm main2}}

We need the following proposition to prove Theorem \ref{thm main2}.

\begin{proposition}\label{prop2}
  Let $b \in BMO(X)$ be a nonnegative function. Then for $\kappa\in(0,1), \omega\in A_{1}(X)$,  $[ M^{\sharp}, b]$  is bounded on $L_{\omega}^{q,\kappa}(X)$, $1< q < \infty.$  
\end{proposition}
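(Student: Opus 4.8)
The plan is to avoid the localization argument of Proposition \ref{prop1} altogether and instead mimic the proof of Theorem \ref{thmax}: I would reduce the boundedness of $[M^{\sharp}, b]$ to a pointwise domination by the maximal commutator $C_b$, and then close the estimate using the inequality \eqref{maxcommu} together with the Morrey-space boundedness of the Hardy--Littlewood maximal function. The two features of $M^{\sharp}$ I intend to exploit are its \emph{positive homogeneity} (for a nonnegative constant $c$, $M^{\sharp}(cg) = c\,M^{\sharp}g$) and its \emph{sublinearity} in the form $|M^{\sharp}g_1 - M^{\sharp}g_2| \leq M^{\sharp}(g_1 - g_2)$, the latter following from the subadditivity of $M^{\sharp}$ and $M^{\sharp}(-g)=M^{\sharp}(g)$; crucially, these are combined with the hypothesis that $b$ is nonnegative.

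First I would establish the pointwise estimate
\begin{equation*}
  |[M^{\sharp}, b]f(x)| \leq 2\, C_b(f)(x), \qquad x \in X.
\end{equation*}
Since $b \geq 0$, the value $b(x)$ is a nonnegative constant relative to the integration variable, so $b(x)M^{\sharp}f(x) = M^{\sharp}(b(x)f)(x)$ by positive homogeneity. Applying sublinearity with $g_1 = b(x)f$ and $g_2 = bf$ gives
\begin{equation*}
  |[M^{\sharp}, b]f(x)| = \bigl|M^{\sharp}(b(x)f)(x) - M^{\sharp}(bf)(x)\bigr| \leq M^{\sharp}\bigl((b - b(x))f\bigr)(x).
\end{equation*}
Using the elementary bound $\frac{1}{\mu(B)}\int_B |g - g_B|\,d\mu \leq \frac{2}{\mu(B)}\int_B |g|\,d\mu$ with $g = (b - b(x))f$ and taking the supremum over balls $B \ni x$ produces exactly $2\,C_b(f)(x)$.

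With this estimate in hand I would invoke \eqref{maxcommu}, namely $C_b(f)(x) \leq C\|b\|_{BMO(X)} M^2 f(x)$, to obtain $|[M^{\sharp}, b]f(x)| \leq C\|b\|_{BMO(X)} M^2 f(x)$. Since $\omega \in A_1(X) \subseteq A_q(X)$ for every $q \in (1,\infty)$, Theorem \ref{bhl} shows that $M$ is bounded on $L_{\omega}^{q,\kappa}(X)$; applying it twice gives $\|M^2 f\|_{L_{\omega}^{q,\kappa}(X)} \lesssim \|f\|_{L_{\omega}^{q,\kappa}(X)}$, and therefore
\begin{equation*}
  \|[M^{\sharp}, b]f\|_{L_{\omega}^{q,\kappa}(X)} \leq C\|b\|_{BMO(X)}\|f\|_{L_{\omega}^{q,\kappa}(X)}.
\end{equation*}

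The key step to get right is the pointwise reduction $|[M^{\sharp}, b]f| \leq 2\,C_b(f)$: it is precisely the nonnegativity of $b$ that permits pulling the scalar $b(x)$ inside $M^{\sharp}$ and then applying sublinearity, and the factor $2$ generated here is what accounts for the $2M^{\sharp}(b)$ appearing in condition (iii) of Theorem \ref{thm main2}. Once this domination is secured the rest is routine, relying only on the already-established Morrey boundedness of $M$ (Theorem \ref{bhl}) and the cited bound \eqref{maxcommu}, exactly as in the proof of Theorem \ref{thmax}; no separate weighted $L^q$ estimate or localization as in Proposition \ref{prop1} is needed.
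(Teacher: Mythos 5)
Your proof is correct, and while the key reduction matches the paper's, the way you close the estimate is genuinely different. The first half coincides in substance with the paper's argument: the paper also uses the nonnegativity of $b$ to pull $b(t)$ inside the supremum defining $M^{\sharp}$, and after expanding the definition and adding and subtracting $b(t)(f)_B$ it arrives at exactly your pointwise bound $|[M^{\sharp},b]f(t)| \leq 2\sup_{B\ni t}\frac{1}{\mu(B)}\int_B |b(y)-b(t)||f(y)|\,d\mu(y) = 2C_b(f)(t)$; your derivation via subadditivity, $M^{\sharp}(-g)=M^{\sharp}(g)$, and the oscillation bound $\frac{1}{\mu(B)}\int_B|g-g_B|\,d\mu \leq \frac{2}{\mu(B)}\int_B |g|\,d\mu$ is simply a more structural route to the same inequality. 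Where you diverge is afterwards. The paper applies H\"older's inequality inside the supremum to dominate $C_b(f)(t)$ by $\|b\|_{BMO(X)} M_r(f)(t)$ for an intermediate exponent $r<q$ and then invokes Lemma \ref{mqbound}; note that the step bounding $\left(\frac{1}{\mu(B)}\int_B|b(y)-b(t)|^{r'}\,d\mu(y)\right)^{1/r'}$ by $\|b\|_{BMO(X)}$ as written really requires a John--Nirenberg argument with $b_B$ in place of $b(t)$, so it is stated somewhat loosely there. You instead quote \eqref{maxcommu}, $C_b(f) \leq C\|b\|_{BMO(X)}M^2(f)$, and apply Theorem \ref{bhl} twice using $A_1 \subset A_q$ --- precisely the mechanism the paper itself uses to prove Theorem \ref{thmax}. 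This is cleaner (the delicate BMO estimate is outsourced to the cited result of \cite{mgkm}) and formally needs only $\omega \in A_q(X)$, whereas the paper's route requires boundedness of $M_r$, i.e. $\omega \in A_r$ for some $r$ strictly below $q$, which is where the $A_1$ hypothesis is used. One side remark in your writeup is off: the factor $2$ in condition (iii) of Theorem \ref{thm main2} comes from the identity $M^{\sharp}(\chi_Q) = \frac{1}{2}$ used in the implication (ii) $\Rightarrow$ (iii), not from the constant $2$ in your pointwise domination; this does not affect the validity of your proof of the proposition.
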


\begin{proof}
  Consider
  \begin{align*}
      |[ M^{\sharp}, b](f)(t)| &= |b(t)M^{\sharp}(f)(t)-M^{\sharp}(bf)(t)|\\
      & =  \left| b(t)\sup_{B \ni t} {1\over \mu(B)}\int_B |f(y) - (f)_B|d\mu(y)-\sup_{B \ni t} {1\over \mu(B)}\int_B |b(y)f(y) - (bf)_B|d\mu(y)\right|\\
&\leq \sup_{B \ni t} {1\over \mu(B)}\int_B |b(t)f(y)-b(t)(f)_B-b(y)f(y) +(bf)_B|d\mu(y)\\
&\leq \sup_{B \ni t} {1\over \mu(B)}\int_B |b(y)f(y) - b(t)f(y)|d\mu(y)+\sup_{B \ni t} {1\over \mu(B)}\int_B|
b(t)(f)_B - (bf)_B| d\mu(y)\\
&\leq 2 \sup_{B \ni t} {1\over \mu(B)}\int_B |b(y) - b(t)|f(y)d\mu(y)\\
&\leq 2 \sup_{B \ni t}\left( {1\over \mu(B)}\int_B |b(y) - b(t)|^{r'}d\mu(y)\right)^{1/r'}\left({1\over \mu(B)}|f(y)|^r d\mu(y)\right)^{1/r}\\
&\leq \|b\|_{BMO(X)}M_r(f)(t).
\end{align*}

We used H\"older's inequality in the second to last step above, where $p<r<q$ and $\frac{1}{r}+\frac{1}{r'}=1$. Therefore, we obtain
$$\|[ M^{\sharp}, b](f)\|_{ L_{\omega}^{q,\kappa}(X)}  \lesssim  \|b\|_{BMO(X)}\|M_r(f)\|_{L_{\omega}^{q,\kappa}(X)} \lesssim  \|b\|_{BMO(X)}\|f\|_{L_{\omega}^{q,\kappa}(X)}.$$

Here we used that $M_r(f)$ is bounded on $L_{\omega}^{q,\kappa}(X)$ which follows from the Lemma \ref{mqbound}.

\end{proof}

\subsection{Proof of Theorem \ref{thm main2}}

\begin{proof}
    We will prove the equivalence by showing $(i)\implies (ii) \implies (iii)\implies (i).$

The inclusion $(i)\implies (ii)$,  follows in the same lines as the proof of the Theorem \ref{thm main1}. For any quasi-linear operator $T$ and general BMO function $b$ we have the following

\begin{align}\label{eqine}
    |[T,b](f)-[T,|b|](f)|\leq 2(b^- T(f) + T(b^- f))
\end{align}

As $M^{\sharp}$ is a quasi-linear operator, \eqref{eqine} holds true. Now using $M^{\sharp}(f) \leq 2 M(f)$ for all locally integrable functions $f$, and the Proposition \ref{prop2} and that $b^-\in L^\infty(X)$, $(i)$ follows immediately.

Now we prove the implication $(ii)\implies (iii)$. Let us fix a ball $Q$ and assume $\chi_Q \in L_{\omega}^{q,\kappa}(X)$. We know that $M^{\sharp}(\chi_Q) = \frac{1}{2}$, now using $(ii)$, we have 

\begin{align*}
\|[M^{\sharp}, b](\chi_Q)\|_{L_{\omega}^{q,\kappa}(X)} &\leq C\|\chi_Q\|_{L_{\omega}^{q,\kappa}(X)}
\end{align*}

which is same as

\begin{align*}
 \left(\frac{1}{\omega(Q)^{\kappa}} \int_{Q}|b(t)M^{\sharp}(\chi_Q)(t)-M^{\sharp}(b\chi_Q)(t)|^q \omega(t)d\mu(t)\right)^{1/q}   &\leq C\omega(Q)^{\frac{1-\kappa}{q}}\\
 \left(\frac{1}{\omega(Q)} \int_{Q}|b(t)-2M^{\sharp}(b\chi_Q)|^q \omega(t)d\mu(t)\right)^{1/q} \leq \tilde{C}
\end{align*}

Hence we obtain $(iii)$

\begin{align*}
    \sup_{Q}\frac{\|(b - 2M^{\sharp}(b))\chi_Q\|_{L_{\omega}^{q,\kappa}(X)}}{\|\chi_Q\|_{L_{\omega}^{q,\kappa}(X)}}=\sup_Q \left(\frac{1}{\omega(Q)} \int_{Q}|b(t)-2M^{\sharp}(b\chi_Q)|^q \omega(t)d\mu(t)\right)^{1/q}  < \infty.
\end{align*}

Now we finally prove $(iii) \implies (i)$. The proof here is similar to proof of Theorem \ref{thm main1}. Consider for a ball $Q$, a ball $\tilde{Q}$ containing $Q$ such that $\mu(\tilde{Q}) = 2\mu(Q)$. Suppose that $x\in Q$, then we have 

\begin{align*}
    \frac{1}{2\mu(Q)} \int_{Q}|b(t)-\frac{1}{2}b_Q|d\mu(t) + \frac{1}{4}|b_Q| & = \frac{1}{2\mu(Q)} \left(  \int_{Q}|b(t)-\frac{1}{2}b_Q|d\mu(t) + \frac{1}{2} \mu(\tilde{Q} \setminus Q)|b_Q|\right)\\
    &=\frac{1}{\mu(\tilde{Q})}\int_{Q}|b\chi_Q(t)-(b\chi_Q)_{\tilde{Q}}|d\mu(t)+ \int_{\tilde{Q}\setminus Q}|b\chi_Q(t)-(b\chi_Q)_{\tilde{Q}}|d\mu(t) \\
    &=\frac{1}{\mu(\tilde{Q})}\int_{\tilde{Q}}|b\chi_Q(t)-(b\chi_Q)_{\tilde{Q}}|d\mu(t)\\
    & \leq M^{\sharp}(b\chi_Q)(x).
\end{align*}

Here we used that $(b\chi_Q)_{\tilde{Q}} = \frac{1}{2}b_{Q}$. Now, observe that

\begin{align*}
    |b_Q| &\leq \frac{1}{\mu(Q)} \int_{Q}|b(t)-\frac{1}{2}b_Q|d\mu(t) + \frac{1}{\mu(Q)}\int_Q |\frac{1}{2}b_Q|d\mu(t)\\
    & = \frac{1}{\mu(Q)}\left( \int_{Q}|b(t)-\frac{1}{2}b_Q|d\mu(t) + \frac{1}{2}|b_Q|\right),
\end{align*}

So, we get
\begin{align*}
    |b_Q|\leq \frac{1}{\mu(Q)} \int_{Q}|b(t)-\frac{1}{2}b_Q|d\mu(t)+\frac{1}{2}|b_Q| \leq  2M^{\sharp}(b\chi_Q)(x) \quad \forall x\in Q.
\end{align*}

Now will now show $b\in BMO(X)$. Let us consider $E=\{x\in Q: b(x)\leq b_Q\},$ then using the same strategy as used in the proof of the Theorem \ref{thm main1}, we have

\begin{align*}
    \frac{1}{\mu(Q)}\int_Q|b(t) - b_{Q}|d\mu(t) & =  \frac{2}{\mu(Q)}\int_E (b_{Q}-b(t))d\mu(t) \\
    &\leq  \frac{2}{\mu(Q)}\int_E|2M^{\sharp}(b\chi_Q)(t)-b(t)|d\mu(t)  \\
    & \leq \frac{2}{\mu(Q)}\int_E|2M^{\sharp}(b\chi_Q)(t)-b(t)|\omega(t)^{\frac{1}{q}}\omega(t)^{\frac{-1}{q}}d\mu(t)\\
    & \leq 2\left(\frac{1}{\mu(Q)}\int_B|b(t) - 2M^{\sharp}(b\chi_Q)(t)|^q \omega(t)|d\mu(t)\right)^{\frac{1}{q}} \times \left(\frac{1}{\mu(Q)}\int_B \omega(t)^{\frac{-q'}{q}}d\mu(t)\right)^{\frac{1}{q'}}\\
    &\lesssim 2 [\omega]_{A_q}^{1/q}\left(\frac{1}{\mu(Q)}\int_B|b(t) - 2M^{\sharp}(b\chi_Q)(t)|^q \omega(t)|d\mu(t)\right)^{\frac{1}{q}} \times \left(\frac{1}{\mu(Q)}\int_B \omega(t)d\mu(t)\right)^{\frac{-1}{q}}\\
    &\lesssim 2 [\omega]_{A_q}^{1/q}\left(\frac{1}{\omega(Q)}\int_B|b(t) - 2M^{\sharp}(b\chi_Q)(t)|^q \omega(t)|d\mu(t)\right)^{\frac{1}{q}}\\&\leq 2 [\omega]_{A_q}^{1/q} \sup_{Q}\frac{\|(b - 2M^{\sharp}(b))\chi_Q\|_{L_{\omega}^{q,\kappa}(X)}}{\|\chi_Q\|_{L_{\omega}^{q,\kappa}(X)}}
    < \infty.
\end{align*}

\vspace{2 cm}

To show that b is bounded, consider

\begin{align*}
    |b_Q| - b^+(t) + b^-(t) \leq 2M^{\sharp}(b\chi_Q)(t) - b(t)
\end{align*}

Now averaging over $Q$, we have

\begin{align*}
    |b_Q| - \frac{1}{\mu(Q)}\int_Q b^+(t) d\mu(t)  + \frac{1}{\mu(Q)}\int_Q b^-(t)d\mu(t) &= \frac{1}{\mu(Q)}\int_Q ( |b_Q| - b^+(t) + b^-(t))d\mu(t) \\
    &\leq \frac{1}{\mu(Q)}\int_Q |2M^{\sharp}(b\chi_Q)(t) - b(t)|d\mu(t)\\
    &\leq \frac{1}{\mu(Q)}\int_Q |2M^{\sharp}(b\chi_Q)(t) - b(t)|\omega(t)^{\frac{1}{q}}\omega(t)^{\frac{-1}{q}}d\mu(t)\\
    & \lesssim \left(\frac{1}{\omega(Q)}\int_B|b(t) - 2M^{\sharp}(b\chi_Q)(t)|^q \omega(t)|d\mu(t)\right)^{\frac{1}{q}}\\
    &\leq C
\end{align*}

Let $\mu(Q) \rightarrow 0$ with $t\in Q,$ then using the Lebesgue differentiation theorem we obtain the following

\begin{equation*}
    2b^-(t) = |b(t)|- b^+(t) + b^-(t) \leq C.
\end{equation*}

\end{proof}



\subsection{Proof of Theorem \ref{thm main3}.} To show the equivalence, we will first show $(i) \implies (ii).$ Assume $b\in BMO(X)$ and $b^- \in L^{\infty}(X).$ Now using the equation \ref{relmaxfrac} and Lemma \ref{lemfrac2}, Lemma \ref{prop3}, we obtain the following for any $f\in L_{loc}^1(X)$

\begin{align*}
   \| [b, M_{\gamma}]f\|_{L_{\omega^q}^{q,\frac{\kappa q}{p}}(X)} &\leq \|M_{b,\gamma}(f)(x) + 2b^-(x)M_{\gamma}f(x)\|_{L_{\omega^q}^{q,\frac{\kappa q}{p}}(X)}\\
   &\leq \|M_{b,\gamma}(f)\|_{L_{\omega^q}^{q,\frac{\kappa q}{p}}(X)} + \|2b^-(x)M_{\gamma}f(x)\|_{L_{\omega^q}^{q,\frac{\kappa q}{p}}(X)}\\
   &\leq \|b\|_{BMO(X)} \|f\|_{L_{(\omega^p,\omega^q)}^{p,\kappa}(X)} + \|b^-\|_{L^{\infty}(X)} \|f\|_{L_{(\omega^p,\omega^q)}^{p,\kappa}(X)}\\
   &\lesssim \|f\|_{L_{(\omega^p, \omega^q)}^{p,\kappa}(X)}.
   \end{align*}

This gives us $(ii).$

Now let us show $(ii) \implies (iii)$. Assume there exists some $p$ and $q$ such that $1< p < \frac{1}{\gamma} $ and $ \frac{1}{q} = \frac{1}{p} - \gamma$ and that $[b, M_{\gamma}]$ is bounded from $L_{(\omega^p,\omega^q)}^{p,\kappa}(X)$ to $L_{\omega^q}^{q,\frac{\kappa q}{p}}(X)$. We will now show there is a constant $C$ such that \eqref{eqfrac} holds. Note that

\begin{align*}
 \|(b - \mu(Q)^{-\gamma}M_{\gamma,Q}(b)\chi_Q)\|_{L_{\omega^q}^{q,\frac{\kappa q}{p}}(X)} &= \left(\frac{1}{(\omega^q(Q))^\frac{\kappa q}{p}} \int_Q|b(x)-\mu(Q)^{-\gamma}M_{\gamma,Q}(b)(x)|^q \omega^q(x) d\mu(x)\right)^{\frac{1}{q}}\\ 
   & = \frac{1}{\mu(Q)^{\gamma}}\left( \frac{1}{(\omega^q(Q))^\frac{\kappa q}{p}}\int_Q|b(x)\mu(Q)^{\gamma}-M_{\gamma,Q}(b)(x)|^q \omega^q(x) d\mu(x)\right)^{\frac{1}{q}}
\end{align*}

\vspace{0.2 cm}

For any given ball $Q$ and given any $x\in Q$, we can easily show

\begin{equation*}
    M_{\gamma}(b\chi_Q)(x) = M_{\gamma, Q}(b)(x) \quad \textit{and}  \quad M_{\gamma}(\chi_Q)(x) = \mu(Q)^{\gamma}.
\end{equation*}

So, we have 

\begin{align}\label{eqcommu}
  |b(x)\mu(Q)^{\gamma}-M_{\gamma,Q}(b)(x)|^q &= 
  |b(x)M_{\gamma}(\chi_Q)(x)- M_{\gamma}(b\chi_Q)(x)|^q = |[b, M_{\gamma}](\chi_Q)(x)|^q .
\end{align}

Hence, using $(ii)$ and the equation \eqref{eqcommu}, it follows that 

\begin{align*}
     \frac{\|(b(x) - \mu(Q)^{-\gamma}M_{\gamma,Q}(b)\chi_Q)\|_{L_{\omega^q}^{q,\frac{\kappa q}{p}}(X)}}{\|\chi_Q\|_{L_{\omega^q}^{q,\frac{\kappa q}{p}}(X)}} &= \frac{\frac{1}{\mu(Q)^{\gamma}}\left( \frac{1}{(\omega^q(Q))^\frac{\kappa q}{p}}\int_Q|b(x)\mu(Q)^{\gamma}-M_{\gamma,Q}(b)(x)|^q \omega^q(x) d\mu(x)\right)^{\frac{1}{q}}}{\|\chi_Q\|_{L_{\omega^q}^{q,\frac{\kappa q}{p}}(X)}}\\
     &=\frac{\frac{1}{\mu(Q)^{\gamma}}\left(\frac{1}{(\omega^q(Q))^\frac{\kappa q}{p}} \int_Q|[b, M_{\gamma}](\chi_Q)(x)|^q \omega^q(x) d\mu(x)\right)^{\frac{1}{q}}}{\|\chi_Q\|_{L_{\omega^q}^{q,\frac{\kappa q}{p}}(X)}}\\
    &= \frac{\frac{1}{\mu(Q)^{\gamma}}\|[b, M_{\gamma}](\chi_Q)\|_{L_{\omega^q}^{q,\frac{\kappa q}{p}}(X)}}{\|\chi_Q\|_{L_{\omega^q}^{q,\frac{\kappa q}{p}}(X)}}
     \lesssim \frac{\frac{1}{\mu(Q)^{\gamma}}\|\chi_Q\|_{L_{(\omega^p,\omega^q)}^{p,\kappa}(X)}}{\|\chi_Q\|_{L_{\omega^q}^{q,\frac{\kappa q}{p}}(X)}}\\
     &= \frac{\frac{1}{\mu(Q)^{\gamma}}\left(\frac{1}{(\omega^q(Q))^\kappa}\int_Q\omega^p(x) d\mu(x)\right)^{\frac{1}{p}}}{\left(\frac{1}{(\omega^q(Q))^\frac{\kappa q}{p}}\int_Q\omega^q(x) d\mu(x)\right)^{\frac{1}{q}}}\\
     &= \frac{\frac{1}{\mu(Q)^{\gamma}}\left(\int_Q\omega^p(x) d\mu(x)\right)^{\frac{1}{p}}}{\left(\int_Q\omega^q(x) d\mu(x)\right)^{\frac{1}{q}}} = \frac{\frac{1}{\mu(Q)^{\frac{1}{p}-\frac{1}{q}}}\left(\int_Q\omega^p(x) d\mu(x)\right)^{\frac{1}{p}}}{\left(\int_Q\omega^q(x) d\mu(x)\right)^{\frac{1}{q}}} \\
     &= \frac{\left(\frac{1}{\mu(Q)}\int_Q\omega^p(x) d\mu(x)\right)^{\frac{1}{p}}}{\left(\frac{1} {\mu(Q)}\int_Q\omega^q(x) d\mu(x)\right)^{\frac{1}{q}}}\\&\lesssim \left(\frac{1}{\mu(Q)}\int_Q\omega^p(x) d\mu(x)\right)^{\frac{1}{p}} \left(\frac{1}{\mu(Q)}\int_Q\omega^{-q'}(x) d\mu(x)\right)^{\frac{1}{q'}} [w^q]_{A_q}^{\frac{1}{q}} \\
     &= [w]_{A_{q,p}} [w^q]_{A_q}^{\frac{1}{q}} < \infty.
\end{align*}

In the above computation we used that $\gamma = \frac{1}{p}-\frac{1}{q}$ and the fact that if $\omega \in A_{p,q}$ then we have that $\omega \in A_{q,p}(X)$ and $\omega^q \in A_q(X)$ given by the Remark \ref{weightorder}. Hence we obtain $(iii)$.

\vspace{0.1 cm}

Now we will show $(iii) \implies (i)$, i.e. we assume \eqref{eqfrac} holds, then show that implies $b \in BMO(X)$ and $b^- \in L^{\infty}(X).$

\vspace{0.1 cm}

To show $b\in BMO(X)$, first fix any ball $Q\in X$ and let $E=\{x\in Q: b(x)\leq b_Q\}$. Then we have

\begin{align*}
    \|b\|_{BMO(X)}=\frac{1}{\mu(Q)}\int_Q|b(x)-b_Q|d\mu(x) =  \frac{2}{\mu(Q)}\int_E (b_{Q}-b(x))d\mu(x) \\\leq \frac{2}{\mu(Q)}\int_E  |b(x)-\mu(B)^{-\gamma}M_{\gamma, B}(x)|d\mu(x).
\end{align*}

The last inequality here follows as we have $|b_Q|\leq \mu(Q)^{-\gamma}M_{\gamma, Q}(x),$ for all $x\in Q$ (see, \cite[Lemma 2.8]{wz}).

\vspace{0.2 cm}

Hence we have



\begin{align*}
      \|b\|_{BMO(X)}&=\frac{1}{\mu(Q)}\int_Q|b(x)-b_Q|d\mu(x)\\
      & \leq \frac{2}{\mu(Q)}\int_E|b(x)-\mu^{-\gamma}(Q)M_{\gamma,Q}(b)(x)|d\mu(t)\\
      & \leq \frac{2}{\mu(Q)}\int_Q|b(x)-\mu^{-\gamma}(Q)M_{\gamma,Q}(b)(x)|d\mu(t)\\
    &= \frac{2}{\mu(Q)}\int_Q|b(x)-\mu^{-\gamma}(Q)M_{\gamma,Q}(b)(x)|\omega(x)\omega^{-1}(x)d\mu(x)\\
& \leq 2 \left(\frac{\omega^q(Q))^\frac{\kappa q}{p}}{\mu(Q)}\frac{1}{(\omega^q(Q))^\frac{\kappa q}{p}} \int_Q|b(x)-\mu^{-\gamma}(Q)M_{\gamma,Q}(b)(x)|^q\omega^q(x)d\mu(x)\right)^{\frac{1}{q}} \left(\frac{1}{\mu(Q)}\int_Q \omega^{-q'}(x)d\mu(x)\right)^{\frac{1}{q'}}\\
   &= 2\left(\frac{\omega^q(Q))^\frac{\kappa }{p}}{\mu(Q)^{\frac{1}{q}}}\right)\|(b(x) - \mu(Q)^{-\gamma}M_{\gamma,Q}(b)\chi_Q)\|_{L_{\omega^q}^{q,\frac{\kappa q}{p}}(X)}\left(\frac{1}{\mu(Q)}\int_Q \omega^{-q'}(x)d\mu(x)\right)^{\frac{1}{q'}}\\
   & \lesssim 2\left(\frac{\omega^q(Q))^\frac{\kappa }{p}}{\mu(Q)^{\frac{1}{q}}}\right)\|\chi_Q\|_{L_{\omega^q}^{q,\frac{\kappa q}{p}}(X)}\left(\frac{1}{\mu(Q)}\int_Q \omega^{-q'}(x)d\mu(x)\right)^{\frac{1}{q'}}\\& = 2\left( \frac{\omega^q(Q))^\frac{\kappa }{p}}{\mu(Q)^{\frac{1}{q}}}\right)\left(\frac{1}{(\omega^q(Q))^\frac{\kappa q}{p}}\int_Q\omega^q(x) d\mu(x)\right)^{\frac{1}{q}}\left(\frac{1}{\mu(Q)}\int_Q \omega^{-q'}(x)d\mu(x)\right)^{\frac{1}{q'}}\\
   &= 2\left(\frac{1}{\mu(Q)}\int_Q\omega^q(x) d\mu(x)\right)^{\frac{1}{q}}\left(\frac{1}{\mu(Q)}\int_Q \omega^{-q'}(x)d\mu(x)\right)^{\frac{1}{q'}} = 2[w^q]_{A_q}^{\frac{1}{q}} < \infty.
\end{align*}

The last step in the above inequality follows from the Remark \ref{weightorder}. This concludes that $b\in BMO(X)$. The proof to show $b^-\in L^\infty(X)$ follows exactly as shown in the proof of Theorem \ref{thm main2}.



\setcounter{equation}{0}
\label{s:classical-to-Meyer-spaces}

\bigskip

\vspace{0.2 cm}

\noindent Manasa N. Vempati, Department of Mathematics, Louisiana State
University, Baton Rouge, LA 70803-4918, USA.\\
E-mail: nvempati@lsu.edu
\end{document}